\allowdisplaybreaks \numberwithin{equation}{section}
\theoremstyle{plain}
\newtheorem{theorem}{Theorem}[section]
\newtheorem{proposition}[theorem]{Proposition}
\newtheorem{lemma}[theorem]{Lemma}
\newtheorem{corollary}[theorem]{Corollary}
\theoremstyle{definition}
\newtheorem{remark}[theorem]{Remark}
\def \dist {\mathop {\rm dist}\nolimits}
\def \e {\epsilon}
\title[{Three quantitative versions of the P\'al inequality}]{Three quantitative versions of the P\'al inequality\!}
\author{Ilaria Lucardesi}
\address{Dipartimento di Matematica, Università di Pisa, Largo Bruno Pontecorvo 5, 56127 Pisa, Italy}
\email{ilaria.lucardesi@unipi.it}
\author{Davide Zucco}
\address{Dipartimento di Matematica ``G. Peano'', Università di Torino, Via Carlo Alberto 10, 10123 Torino, Italy}
\email{davide.zucco@unito.it}
\begin{document}

\begin{abstract}
The P\'al inequality is a classical result which asserts that among all planar convex sets of given width the equilateral triangle is the one of minimal area. In this paper we prove three \emph{quantitative} versions of this inequality, by quantifying  
how the closeness of the area of a convex set, of certain width, to the minimal value implies its closeness to the equilateral triangle. %, among bodies with prescribed minimal width.
As a by-product, we also present a novel result concerning a quantitative inequality for the inradius of a set, under minimal width constraint.
\end{abstract}

\maketitle

\noindent {\small \textbf{Keywords}: convex sets, equilateral triangle, minimal width, quantitative inequality}\\ 
{\small \textbf{MSC2020}: 49Q10, 52A40}

\section{Introduction}

The Kakeya needle problem, initially proposed by Kakeya, addresses the intriguing question of whether there exists a set in the plane, of minimal area, within which a needle of length 1 can freely rotate. Kakeya posed this question in 1917 within the restricted class of convex sets. This specific inquiry was solved in 1921 by P\'al, identifying the equilateral triangle of height 1 as the unique convex set capable of accommodating such rotations. On the other hand, Besicovitch extended the investigation to a broader class of sets, removing the convexity assumption, demonstrating that there is no lower bound for the required area: sets of arbitrarily small area can indeed allow a full $360$° rotation of the needle. Over the past decades, a growing realization has dawned upon the mathematical community: this kind of Kakeya needle problems is intricately intertwined with numerous seemingly disparate fields, including Harmonic Analysis, Number Theory, Geometric and Arithmetic Combinatorics, see \cite{tao} and references therein. 
The free rotations of a needle of given length inside a set can be formulated by imposing a constraint on the \emph{minimal width} of the set. Indeed P\'al's solution to Kakeya needle problem is obtained via an inequality bounding from below the area of convex sets of given width. More precisely, for every planar convex body (i.e., compact set with non-empty interior) $K$ there holds
$$
\frac{|K|}{\omega^2(K)} \geq \frac{1}{\sqrt{3}},
$$
where $|K|$  stands for the area ($2$-dimensional Lebesgue measure) of $K$ and $\omega(K)$ its minimal width (minimal distance between two parallel supporting lines). The minimal width of a convex set holds significant importance also in Geometric Tomography, particularly when seeking information about sections or projections of sets, see \cite{gar}.
In the previous formulation, the minimal width constraint is incorporated by dividing by $\omega^2$,  resulting in a scale-invariant left-hand side (it does not change when the set $K$ is scaled). A few proofs of this inequality can be found in the literature: the original geometric proof of P\'al in \cite{pal},  revised in \cite{yagbol}, another one in \cite{cacogr} and a more analytic proof in \cite{mal}. As far as we are aware, these are the only known proofs.
This \emph{P\'al inequality} is sometimes also referred to as \emph{isominwidth inequality} \cite{came} due to its striking resemblance to a more ancient and studied inequality: the \emph{isoperimetric inequality} (see for instance \cite{fus} and references therein). It's worth noting that replacing the perimeter constraint with the minimal width introduces several significant differences. First, the width constraint shifts the focus towards the minimization of the area instead of its maximization. More importantly, the introduction of the minimal width results in a \emph{breaking of the full symmetry}: the minimizer is no longer a disk (it is in fact an equilateral triangle). The interaction of the minimal width with some spectral functionals will be the subject of a forthcoming paper (see \cite{bog} for some conjectures in this direction).

In this paper, our focus is primarily on \emph{quantitative versions of the P\'al inequality}: 
inequalities for which
closeness of the area of a set of given width to the minimal value $1/\sqrt{3}$ implies closeness of the set to the equilateral triangle.
More precisely, we would bound from below the \emph{deficit}
$|K|/ \omega^2(K) - 1/\sqrt{3}$
and 
quantify, with some kind of distance, how close $K$ is to an equilateral triangle. 
This quantification will be achieved through the examination of different types of distances. 

The first quantitative version of the P\'al inequality which we derive is a result in the spirit of Bonnesen \cite{bon}, where we bound the deficit with a quantity depending on the inradius $r(K)$ of a set $K$ (these estimates were named \emph{Bonnesen type inequalities} by Osserman in \cite{oss}).
It is well-known that for every planar convex body $K$ there holds ${r(K)}/{\omega(K)}\geq 1/3$ and equality holds only when $K$ is an equilateral triangle. Therefore, with the quantity
\begin{equation}\label{defeta}
\eta(K):=\frac{r(K)}{\omega(K)}-\frac{1}{3}
\end{equation}
we can measure how $K$ is far from being an equilateral triangle.

\begin{theorem}[Quantitative P\'al inequality via inradius]\label{teo1}
There exists a constant $c_1>0$, such that for every planar convex body $K$ there holds
\begin{equation}\label{eq1}
\frac{|K|}{\omega^2(K)} - \frac{1}{\sqrt{3}} \geq c_1\eta(K), 
\end{equation}
with $\eta$ as in \eqref{defeta}.
\end{theorem}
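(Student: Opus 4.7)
The plan is to combine a global compactness argument with a local stability analysis around the equilateral triangle. By the scale invariance of the quantities in (\ref{eq1}), I normalize $\omega(K) = 1$, reducing the claim to $|K| - 1/\sqrt{3} \geq c_1(r(K) - 1/3)$ for every convex body $K$ of width $1$; note that $r(K) \in [1/3, 1/2]$ (the upper bound comes from the fact that the inscribed disk must fit in the strip of minimal width) and hence $\eta(K) \in [0, 1/6]$.

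For the global step I argue by contradiction: if the bound failed, there would exist a sequence of convex bodies $K_n$ of width $1$ with $|K_n| - 1/\sqrt{3} = o(\eta(K_n)) = o(1)$. Since $|K_n|$ is then bounded, the classical inequality $|K| \geq \omega(K)\, \mathrm{diam}(K)/2$ bounds the diameters, and after appropriate rigid motions Blaschke's selection theorem extracts a Hausdorff-convergent subsequence. The limit $K_\infty$ satisfies $|K_\infty| = 1/\sqrt{3}$ and $\omega(K_\infty) = 1$, so the equality case of P\'al's inequality forces $K_\infty = T^*$, the equilateral triangle of width $1$. Continuity of the inradius under Hausdorff convergence then yields $\eta(K_n) \to 0$, a contradiction outside a small Hausdorff neighborhood of $T^*$; on the complement the desired inequality holds with an explicit constant given by the minimum of the ratio (deficit)$/\eta$.

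It remains to prove the inequality in a small Hausdorff neighborhood of $T^*$. I would parametrize $K$ via its support function, $h_K = h_{T^*} + v$ with $v$ small, subject to $\omega(K) = 1$, and expand both excesses to leading order. Since $T^*$ is a minimizer of both functionals $K\mapsto |K|/\omega^2(K)$ and $K\mapsto r(K)/\omega(K)$, each excess vanishes to first order in $v$, and the inequality reduces to a comparison between two quadratic forms on the space of admissible perturbations: the area excess takes a classical isoperimetric-type form, while the inradius excess is determined by locating the new incircle from its tangencies with the perturbed boundary.

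The main obstacle is the non-smoothness of $r(\cdot)$ at $T^*$: the incircle is tangent to each of the three sides at a single interior point, and a perturbation may remove one or two tangencies and translate the centre along a one-dimensional family, so $K \mapsto r(K)$ is merely Lipschitz and piecewise-smooth. I plan to address this by partitioning the neighborhood according to the combinatorial type of the contact set of the perturbed incircle (three, two, or one tangency with the perturbed sides), giving explicit formulas for the centre and radius in each region, and verifying the quadratic comparison case by case. As a useful guide for the computation, the extremal perturbations for $\eta(K)$ (symmetric outward displacements of the three sides, preserving $\omega = 1$) coincide with those producing the largest area gain, so the worst case of the inequality reduces to a one-parameter family of nearly-equilateral isoceles triangles of width $1$, where a direct expansion (using the formulas $|T| = a/2$ and $r(T) = a/(a+b+c)$ with the Heron constraint encoding $\omega(T)=1$) yields a positive ratio, and hence a positive admissible $c_1$.
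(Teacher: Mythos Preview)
Your compactness reduction is fine, but the local analysis contains a genuine error. You assert that because $T^*$ minimizes both $|K|/\omega^2$ and $r/\omega$, ``each excess vanishes to first order in $v$'' and the problem becomes a comparison of quadratic forms. This is false: the equilateral triangle is a \emph{corner} minimizer for both functionals --- the width is attained in three directions simultaneously and the incircle touches three sides --- so the relevant first variations are only \emph{nonnegative}, not zero. Indeed, for the isosceles triangles $K_\epsilon$ with vertices $(0,1)$, $(\pm(1/\sqrt{3}+\epsilon),0)$ and $\omega(K_\epsilon)=1$, one has $|K_\epsilon|-1/\sqrt{3}=\epsilon$ and $\eta(K_\epsilon)=\epsilon/(2\sqrt{3})+o(\epsilon)$: both excesses are linear in $\epsilon$, not quadratic. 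Your plan to ``compare two quadratic forms'' is therefore built on a wrong premise. (Your description of the extremal perturbation is also off: a symmetric outward push of all three sides dilates the triangle and does not preserve $\omega=1$.) A correct local step would have to compare two first-order cones --- the directional derivative of the area excess versus that of $\eta$ over the tangent cone of width-preserving perturbations --- and this is considerably more delicate than what you sketch, since that cone is polyhedral with several faces and $r$ has different linear expansions on each.

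By contrast, the paper avoids any local/global splitting and proves the inequality in one stroke with an explicit constant. Using the three contact points of the incircle (Lemma~\ref{lemma-contact}), it shows that $K$ contains the convex hull of the indisk together with three points at distance $\omega-r$ from its center; computing this area yields $|K|/\omega^2\ge\varphi(r/\omega)$ for an explicit function $\varphi$ on $[1/3,1/2]$ with $\varphi(1/3)=1/\sqrt{3}$. An elementary calculus bound then gives $\varphi(x)-\varphi(1/3)\ge c_1(x-1/3)$ with $c_1=1/\sqrt{5}$. This is both simpler and sharper (explicit constant) than a compactness-plus-local-expansion approach.
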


The second version of quantitative P\'al inequality is based on the Hausdorff distance $d_\mathcal H$ of sets: given a planar convex body $K$ let
\begin{equation}\label{alphaT}
\alpha_E(K):=\min\left\{\frac{d_{\mathcal H}(K,E)}{\omega(E)}\ :\ E\hbox{ equilateral triangle},\ \omega(E)=\omega(K)\right\}.
\end{equation}
This is a measure of the smallest enlargement, either of $K$ or of a ``best'' equilateral triangle $E$ for $K$, in order to include the other set.

\begin{theorem}[Quantitative P\'al inequality via Hausdorff distance]\label{teo2}
There exists a constant $c_2>0$ such that, for every planar convex body $K$ there holds
\begin{equation}\label{statement1}
\frac{|K|}{\omega^2(K)} - \frac{1}{\sqrt{3}} \geq c_2\, \alpha_E(K),
\end{equation}
with $\alpha_E$ as in \eqref{alphaT}.
\end{theorem}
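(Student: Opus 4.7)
My plan is to derive Theorem~\ref{teo2} from Theorem~\ref{teo1} via the purely geometric comparison
\[
\alpha_E(K) \leq C_0\, \eta(K)
\]
valid for some absolute constant $C_0 > 0$ and every planar convex body $K$. Granting this, \eqref{statement1} follows immediately from \eqref{eq1} with $c_2 := c_1/C_0$. Since both quantities are scale-invariant, I normalize $\omega(K)=1$, so that $r(K) = 1/3 + \eta(K)$.

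I would first dispose of the ``long body'' case directly. For every chord $PQ$ of $K$ of length $d = \operatorname{diam}(K)$, the convexity of $K$ together with the bound on the width in the direction perpendicular to $PQ$ gives $|K| \geq d\,\omega(K)/2$; on the other hand, by the triangle inequality in Hausdorff distance and the fact that an equilateral triangle of width $1$ has diameter $2/\sqrt3$, we have $\alpha_E(K) \leq \operatorname{diam}(K)/\omega(K) + 2/\sqrt3$. Hence \eqref{statement1} follows at once as soon as $\operatorname{diam}(K)$ exceeds a sufficiently large absolute threshold $D_0$, and it is enough to treat bodies with $\operatorname{diam}(K)\leq D_0$.

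To construct an admissible competitor in \eqref{alphaT}, let $O$ be the incenter of $K$ and $B_r := B(O,r)$ its inscribed disk, of radius $r = 1/3 + \eta$. For each $\theta$ let $E(\theta)$ be the equilateral triangle of width $1$ whose incircle is the concentric disk $B(O,1/3)$ and whose sides are rotated by angle $\theta$; pick $\theta^\ast$ minimizing $\theta \mapsto d_\mathcal H(K,E(\theta))$ and set $E:=E(\theta^\ast)$. The task is to prove $d_\mathcal H(K,E) \leq C_0 \eta$, which I would split into the two inclusions of each of $K$, $E$ inside the $C_0\eta$-tubular neighborhood of the other. For $K \subset$ (tube around $E$), the key is that $K$ has width exactly $1$, so it lies in each of the three parallel strips of width $1$ bounded by the supporting lines of $K$ aligned with the sides of $E$; the intersection of these strips differs from $E$ by vertex caps whose size is controlled linearly by the displacement of these supporting lines from the sides of $E$, which in turn is controlled by the position of $B_r$. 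For the reverse inclusion, $B_r \supset B(O,1/3)$ together with convexity and the width-$1$ constraint force $K$ to extend towards all three vertex regions of $E$ by an amount comparable to $\eta$.

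The main obstacle is obtaining a \emph{linear} Hausdorff bound in $\eta$. A soft compactness/contradiction argument would only yield the qualitative statement $\alpha_E(K)\to 0$ as $\eta(K)\to 0$. To upgrade to a linear rate I expect to need a quantitative geometric lemma stating that any localized deviation of $K$ from $E$ of amplitude $\delta$ produces an inradius excess of order $\delta$, to be established by an explicit analysis of how the inscribed disk enlarges and shifts under small perturbations of $K$, in tandem with the width-$1$ constraint and the fact that the optimal rotation $\theta^\ast$ has been chosen. This quantitative linearization, rather than the overall reduction strategy, is where I foresee the principal technical work.
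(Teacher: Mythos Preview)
Your overall reduction is sound: once $\operatorname{diam}(K)$ is bounded, the inequality $\alpha_E(K)\le C_0\,\eta(K)$ does hold, and Theorem~\ref{teo1} then yields \eqref{statement1}. The paper in fact proves exactly this linear bound in the regime of small $\eta$ (see the ``small inradius'' part of the proof of Theorem~\ref{r-omega-quantitative}, culminating in \eqref{finale}), and for large $\eta$ your diameter cutoff plus the trivial estimate ($\alpha_E$ bounded, $\eta$ bounded below) would indeed suffice---this is arguably simpler than the paper's route via the refined function $\psi$ of Lemma~\ref{psi} and Proposition~\ref{lowerpsi}, which handles the large-$\eta$ regime for all diameters at once without a separate ``long body'' case. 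So the two decompositions differ: the paper splits on the size of $\eta$, you split on the size of the diameter, and the two coincide only in the small-$\eta$ core.

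The genuine gap is that you have not proved the small-$\eta$ linear bound, and your sketch does not make contact with the mechanism that actually drives it. The paper's proof of $\alpha_E\le C\eta$ for small $\eta$ is not a perturbation of the incircle or an optimal rotation $\theta^\ast$: it passes through the \emph{circumscribed triangle} $T_K$ (Lemma~\ref{lemma-contact}(ii)), estimates $d_{\mathcal H}(K,T_K)$ and $d_{\mathcal H}(T_K,E)$ separately (Lemmas~\ref{lem.convex} and~\ref{lem.triangle}, assembled in Proposition~\ref{asy.convex}), and then matches each of the resulting terms against the two pieces in the refined lower bound \eqref{ratio-rw2better} for $\eta$. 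The point is that $\eta$ naturally controls both the gap $\omega(T_K)-\omega(K)$ and the side defect $s_1-s_3$ of $T_K$, and these are precisely the quantities governing the Hausdorff distance to a well-placed equilateral triangle. Your proposed competitor $E(\theta^\ast)$ concentric with the indisk does not give direct access to these quantities, and the heuristic ``a deviation of amplitude $\delta$ forces inradius excess of order $\delta$'' is the statement to be proved, not a method of proof. To close the gap you should either rediscover the circumscribed-triangle structure or give an independent argument that makes the linear rate explicit; a compactness argument, as you note, will not do.
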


The third quantitative inequality is given in terms of what is usually called \emph{Fraenkel asymmetry}, which we define for every planar convex body $K$ as the quantity
\begin{equation}\label{AT}
\mathcal A_E(K):=\min\left\{\frac{|K \Delta E|}{\omega^2(E)}\ :\ E \hbox{ equilateral triangle},\ \omega(E)=\omega(K)\right\},
\end{equation}
where $K \Delta E$ denotes the symmetric difference of the sets $K$ and $E$, namely $K \Delta E=(K\setminus E)\cup (E\setminus K)$. This asymmetry measures the area of the symmetric difference between $K$ and a ``best'' equilateral triangle.

\begin{theorem}[Quantitative P\'al inequality via Fraenkel asymmetry]\label{teo3}
There exists a constant $c_3>0$ such that, for every planar convex body $K$ 
there holds
\begin{equation}\label{statement2}
\frac{|K|}{\omega^2(K)} - \frac{1}{\sqrt{3}} \geq {c_3}\, \mathcal A_E(K),
\end{equation}
with $\mathcal A_E$ as in \eqref{AT}.
\end{theorem}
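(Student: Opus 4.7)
The plan is to deduce Theorem~\ref{teo3} from the previously established Theorem~\ref{teo2}, by proving a universal dominance $\mathcal A_E(K) \le C\,\alpha_E(K)$; combined with \eqref{statement1} this immediately yields \eqref{statement2} with $c_3 = c_2/C$. The natural bridge from Hausdorff distance to symmetric difference is the Steiner formula: letting $E^\star$ be an equilateral triangle of width $\omega(K)$ attaining the minimum in \eqref{alphaT} and setting $\delta := d_{\mathcal H}(K,E^\star) = \alpha_E(K)\,\omega(K)$, the inclusions $K \subseteq E^\star + B_\delta$ and $E^\star \subseteq K + B_\delta$ (with $B_\delta$ the closed disk of radius $\delta$) give, by applying the planar Steiner formula to both convex bodies,
\begin{equation*}
|K \Delta E^\star| \;\le\; \bigl(P(K) + P(E^\star)\bigr)\,\delta + 2\pi\,\delta^2,
\end{equation*}
where $P(E^\star) = 2\sqrt 3\,\omega(K)$. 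The difficulty is that $P(K)$ is not \emph{a priori} controlled by $\omega(K)$: a thin elongated convex body has bounded width but arbitrarily large diameter and perimeter.

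To eliminate this obstruction I would split the analysis according to the size of the deficit. If $|K|/\omega^2(K) \ge M$ for a suitable universal $M$ (e.g.\ $M = 2/\sqrt 3$ works), the trivial estimate $\mathcal A_E(K) \le |K|/\omega^2(K) + 1/\sqrt 3$, coming from $|K \Delta E| \le |K|+|E|$ together with $|E| = \omega^2(E)/\sqrt 3$, makes \eqref{statement2} immediate for any sufficiently small $c_3$. In the complementary bounded-deficit regime $|K|/\omega^2(K) \le M$, a short convex-geometric argument yields the key estimate $\mathrm{diam}(K) \le 4\,|K|/\omega(K) \le 4M\,\omega(K)$: if $P,Q \in K$ realize the diameter, then the width constraint in the direction orthogonal to $PQ$ furnishes a point $R \in K$ at perpendicular distance $\ge \omega(K)/2$ from the line $PQ$, and $|K|$ is bounded below by the area of the triangle $PQR$, which is at least $\omega(K)\,\mathrm{diam}(K)/4$. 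Coupled with the standard bound $P(K) \le \pi\,\mathrm{diam}(K)$, this controls the perimeter by a universal multiple of $\omega(K)$; and $\alpha_E(K)$ itself is uniformly bounded in this regime for the same reason. The Steiner estimate then collapses to $\mathcal A_E(K) \le C''\,\alpha_E(K)$, and an application of Theorem~\ref{teo2} concludes the proof.

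I expect the geometric lemma $\mathrm{diam}(K) \lesssim |K|/\omega(K)$, together with the dichotomy of regimes it enables, to be the main hurdle: the Steiner-based comparison is routine once $P(K)$ is under control, but one has to track carefully that every constant arising remains independent of $K$.
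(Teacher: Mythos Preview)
Your strategy is correct and matches the paper's at the top level: both deduce Theorem~\ref{teo3} from Theorem~\ref{teo2} by establishing a universal inequality $\mathcal A_E(K)\le C\,\alpha_E(K)$. The implementations differ. The paper (Proposition~\ref{equivalence1}) proves $\mathcal A_E(K)\le(3\sqrt3+2)\,\alpha_E(K)$ for \emph{every} planar convex body by splitting on the size of $\alpha:=\alpha_E(K)$ rather than on the deficit: for $\alpha<1/3$ one uses the explicit inner/outer parallel sets of the triangle $E$ (cf.\ \eqref{contE}); for $\alpha\ge 1/3$ the key observation is that $K$ lies in the intersection of a strip of width $\omega(K)$ with $(E)_\alpha$, hence in a rectangle of sides $\omega(K)$ and $2/\sqrt3\,\omega(K)+2\alpha\,\omega(K)$, which bounds $|K|$ directly without any diameter or perimeter estimate. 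Theorem~\ref{teo3} then follows in one line. Your route---splitting on the deficit, invoking the Steiner formula, and controlling $P(K)$ through the geometric lemma $\mathrm{diam}(K)\le 4|K|/\omega(K)$---is a valid alternative, and your proof of that lemma is fine; but it introduces an auxiliary inequality and a second regime analysis that the paper sidesteps entirely by exploiting the strip inclusion. The paper's argument is shorter and yields the explicit constant $c_3=c_2/(3\sqrt3+2)$; yours would also produce explicit (though somewhat worse) constants.
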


The definition we introduced for $\alpha_E$ and $\mathcal A_E$ are inspired to the classical {\it asymmetries}  
$$
\alpha_D(K):=\min\left\{\frac{d_{\mathcal H} (K,D)}{\sqrt{|D|}}\ :\ D \hbox{ disk},\ |D|=|K| \right\}
$$
and
$$
\mathcal A_D(K):=\min\left\{\frac{|K\Delta D|}{|D|}\ :\ D \hbox{ disk},\ |D|=|K| \right\},
$$
appearing in the quantitative isoperimetric inequalities (see, e.g., \cite{bicrhe, cicleo, fug, fus, fumapr}), in which the equilateral triangle is replaced by a disk and the minimal width constraint by the area constraint. These two asymmetries quantify, is some sense, the distance of a set from being circular. The asymmetry \eqref{AT} has been already used in \cite{ind} for a quantitative Faber--Krahn inequality for triangles.

In quantitative inequalities two main topics are of particular interest: the sharpness of the exponent and the computability of the constant appearing, respectively, in the power and in front of the distance under consideration (the sharp value of the constant is always a challenging problem that can be solved in very few cases, see \cite{cicleo} and references therein).

\begin{remark}[Sharpness of the exponents and computability of the constants]
The exponent $1$ on $\eta$, $\alpha_E$, and $\mathcal A_E$ in the right-hand sides of \eqref{eq1}, \eqref{statement1}, and \eqref{statement2} is sharp, in the sense that these quantitative inequalities are no longer true if the exponent is lowered. Moreover, the constants (though not optimal) are explicit and can be taken equal to the following ones: $
c_1= {1}/{\sqrt{5}}$, $c_2={1}/{(25\sqrt{5})}$, and $c_3=  {1}/{(25(3\sqrt{3} + 2)\sqrt{5})}$.
\end{remark}

The paper is organized as follows. We start in Section~\ref{sec-prel} with some definitions and preliminary facts about the minimal width, the inradius, and the Hausdorff distance. In Section~\ref{sec-fromator} we relate the minimization of the area to the one of the inradius, under minimal width constraint, then we prove Theorem~\ref{teo1}. 
In Section~\ref{sec-r} we derive a new result about a quantitative inequality for the inradius, under minimal width constraint. 
We use this new result to prove, in Section~\ref{sec-a}, Theorem~\ref{teo2} and Theorem~\ref{teo3}. In the last Section~\ref{sec-sharp} we prove the sharpness of the exponents of all the quantitative inequalities discussed in the paper. 

\section{Estimates of the Hausdorff asymmetry}\label{sec-prel}

In this section, we begin by establishing the notation and by proving some preliminary results for the asymmetries required in the quantitative estimates.

We work with planar \emph{convex bodies}, namely with planar compact convex sets with non-empty interior. The shape functionals that we consider are the \emph{area} $|\cdot |$ (i.e., the Lebesgue measure) and the \emph{minimal width} $\omega(\cdot)$. The latter is defined, for a planar convex body, as the minimal distance between two parallel supporting lines enclosing the set. Equivalently, the minimal width is the minimal orthogonal projection of the convex body onto a line: given a planar convex body $K$ then
\[
\omega(K)=\min_{0\leq \theta\leq \pi} \mathrm{proj_\theta}(K),
\]
where $\mathrm{proj}_\theta$ denotes the length of the orthogonal projection of $K$ onto the line $r_\theta = \{y = (\tan \theta) x\}$. Since we will only work with the width with respect to directions for which it is minimal (and not with respect to other directions), we will simply refer to this quantity as the \emph{width}.

An auxiliary shape functional that we also need is the \emph{inradius} $r(\cdot)$, that is the radius of the largest disk (that we call \emph{indisk}) contained into the set. Since $K$ is a convex body it is clear that this disk always exists. 
We recall a fundamental result regarding the boundary points of a convex body that also belong to the inscribed disk. These points are referred to as \emph{contact points}. For a proof see, e.g., \cite[Exercise 6-2]{yagbol} (this just relies on suitable translations and enlargements of disks contained in $K$). 
\begin{lemma}[Contact points]\label{lemma-contact}
Let $K$ be a planar convex body and let $D\subset K$ be a disk of radius $r(K)$. Then the boundaries $\partial K$ and $\partial D$ meet
\begin{itemize}
\item[(i)] either in two diametrically opposite points (which occurs, e.g., for rectangles and disks);
\item[(ii)] or in three points forming the vertices of an acute angled triangle.
\end{itemize}
\end{lemma}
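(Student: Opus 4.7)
The plan is to reduce both alternatives to a single geometric property: the center of the inscribed disk must lie in the convex hull of the contact points. Write $C$ for the center of $D$ and $S := \partial K \cap \partial D$ for the (compact) set of contact points; the key claim is that $C \in \mathrm{conv}(S)$. Once this is established, Carath\'eodory's theorem in the plane furnishes at most three points of $S$ whose convex hull contains $C$. If two points $P_1, P_2 \in S$ suffice, then $C$ lies on the segment $P_1 P_2$ and is equidistant from both endpoints, which forces $C$ to be the midpoint and hence $P_1, P_2$ to be antipodal on $\partial D$: this is case (i). Otherwise $C$ lies strictly inside a triangle $P_1 P_2 P_3$ with vertices on $\partial D$, so it coincides with the circumcenter and sits in the interior, which is equivalent to the triangle being acute: this is case (ii).

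To establish $C \in \mathrm{conv}(S)$, I would argue by contradiction and separation. If $C \notin \mathrm{conv}(S)$, one can choose a unit vector $v$ satisfying $v \cdot (P - C) < 0$ for every $P \in S$, and the task becomes to show that $D + \epsilon v \subset K$ for all sufficiently small $\epsilon > 0$. At each contact point $P$, the inclusion $D \subset K$ forces the tangent line to $D$ at $P$ to be a supporting line of $K$ as well, so the tangent cone of $K$ at $P$ contains the open half-plane $\{w : w \cdot (P - C) < 0\}$; since $v$ belongs to this half-plane, the translated point $P + \epsilon v$ lies in $\mathrm{int}(K)$ for small $\epsilon$. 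Away from $S$, points of $\partial D$ already lie in $\mathrm{int}(K)$, and this inclusion persists under small translations by compactness of $\partial D$ and continuity of the distance to $\partial K$. Hence $D + \epsilon v$ is contained in $K$ at positive distance from $\partial K$, so it can be slightly enlarged to a disk of radius strictly greater than $r(K)$, contradicting the maximality of the inradius.

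The delicate step is the global validity of this perturbation, because $\partial K$ may have corners precisely at contact points, so that no single outward normal exists there. Passing to the tangent cone of $K$ at each $P \in S$ handles this cleanly: since $v$ lies in the \emph{interior} of the half-plane tangent cone to $D$ at $P$, it automatically lies in the interior of the (possibly larger) tangent cone of $K$. A standard compactness argument in a neighborhood of $S$, combined with a uniform distance estimate on the complement, then yields a common $\epsilon > 0$ valid for all points of $\partial D$ and closes the contradiction.
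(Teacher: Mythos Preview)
Your argument is correct. The paper does not actually prove this lemma: it cites \cite[Exercise~6-2]{yagbol} and only remarks that the proof ``just relies on suitable translations and enlargements of disks contained in $K$.'' Your approach is a fully fleshed-out version of exactly that idea---the contradiction step, translating $D$ along a separating direction $v$ and then enlarging, is the ``translation and enlargement'' the paper alludes to.

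What you add beyond the hint is the clean organizing principle $C \in \mathrm{conv}(S)$ together with Carath\'eodory, which immediately yields the dichotomy: two points force $C$ to be their midpoint (antipodal case), while three points put the circumcenter $C$ strictly inside the triangle, equivalent to acuteness. This packaging is standard but elegant, and it makes the statement's two cases fall out without ad~hoc casework. The tangent-cone discussion at possible corners of $\partial K$ is the right way to make the perturbation rigorous, and your observation that $v$ lies in the \emph{interior} of the half-plane tangent cone of $D$ (hence automatically in the interior of the larger tangent cone of $K$) is the correct detail there. The final compactness/uniform-$\epsilon$ step is sketched rather than written out, but the ingredients you name are sufficient.
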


To fix the notation we will need in the following, let us introduce the following definitions. Consider a planar convex set $K$ with an indisk $D$ satisfying condition (ii) in Lemma~\ref{lemma-contact}, i.e., such that the boundaries $\partial K$ and $\partial D$ meet in three points  $P_1$, $P_2$, and $P_3$ forming the vertices of an acute angled triangle.

For every $1\leq i\leq 3$ let $\ell_i$ be the line tangent to $D$ at $P_i$. Each line $\ell_i$ splits the plane into two half-planes and, in view of the convexity of $K$, we have that $K$ and $B$ lie in the same half-planes. Since $P_1P_2P_3$ is an acute angled triangle, we can infer that the intersection of the three half-planes containing $B$ and $K$ is a triangle, that we call a \emph{circumscribed triangle $T_K$ of $K$}. For $1\leq i\leq 3$ we also denote by $V_i$ the vertex of $T_K$ opposite to $P_i$, and by $\gamma_i$ the angle of $T$ at $V_i$. Finally, denoting by $\omega_i$ the directional width of $K$ in the direction orthogonal to $\ell_i$, we consider the points of $K$ lying on the supporting line parallel to $\ell_i$ at distance $\omega_i$: among these points (there esists at least one) we denote by $Q_i$ the closest to $V_i$, see Figure~\ref{fig.KDT}.

The width of a triangle is the smallest of the three heights, associated with the largest side-length. Denoting the side-lengths of $T_K$ by $s_1\geq s_2\geq s_3$ and by $h_1\leq h_2\leq h_3$ the corresponding heights. Now, by construction 
\begin{equation}\label{wrkt}
\omega(K)\leq \omega(T_{K})=h_1 \quad \text{while}\quad  r(K)=r(T_{K}).
\end{equation}
A sufficient criterion for the existence of this circumscribed triangle $T_K$ to $K$ is the \emph{strict} inequality $r(K)< \omega(K)/2$. Indeed if $K$ satisfies such an estimate then $2r(K)<\omega(K)$ and $K$ cannot have two contact points on the same diameter of the indisk $D$.
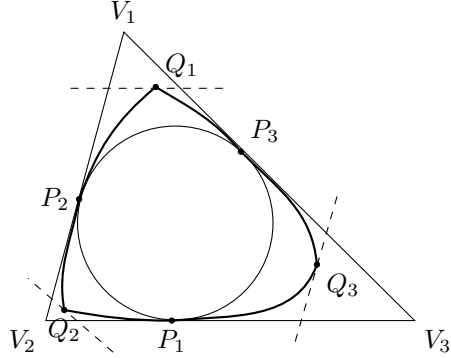
\begin{figure}[t]
        \centering
\begin{tikzpicture}[scale=1.4]
\coordinate (V1) at (120:2);
\coordinate (V2) at (210:2);
\coordinate (V3) at (-30:2);
\coordinate (P3) at (0.1,0.6);
\coordinate (P2) at (-1.42,0.15);
\coordinate (P1) at (-0.55,-1);
\coordinate (Q1) at (120:1.4);
\coordinate (Q2) at (210:1.8);
\coordinate (Q3) at (-30:0.94);
\draw (V1) -- (V2) -- (V3) -- cycle;
\node[above] at (V1) {$V_1$};
\node[below left] at (V2) {$V_2$};
\node[below right] at (V3) {$V_3$};
\node[above right] at (P3) {$P_2$};
\node[left] at (P2) {$P_3$};
\node[below] at (P1) {$P_1$};
\draw[fill] (0.1,0.6) circle (0.7pt);
\draw[fill] (-1.42,0.15) circle (0.7pt);
\draw[fill] (-0.55,-1) circle (0.7pt);
\node[above right] at (Q1) {$Q_1$};
\node[below] at (Q2) {$Q_2$};
\node[below right] at (Q3) {$Q_3$};
\draw[fill] (Q1) circle (0.7pt);
\draw[fill] (Q2) circle (0.7pt);
\draw[fill] (Q3) circle (0.7pt);
\coordinate (center) at (barycentric cs:V1=1.15,V2=1.3,V3=0.95);
\pgfmathsetmacro{\radius}{1.06 * sin(60)}
\draw[thin] (center) circle (\radius);
\draw[dashed] (-1.5,1.2) -- (0,1.2);
\draw[dashed] (0.61,-1.2) -- (1.01,0.2);
\draw[dashed] (-1.1,-1.3) -- (-1.9,-0.6);
\draw[thick] (Q1) to[out=220,in=70] (P2) to[out=260,in=100] (Q2) to[out=350, in=180] (P1) to[out=5,in=250] (Q3) to[out=95,in=315] (P3) to[out=130,in=329] (Q1);
\end{tikzpicture}
 \caption{{In bold it is shown the boundary $\partial K$ of $K$. The circle is the indisk $D$ of $K$ and the triangle $V_1V_2V_3$ is the circumscribed triangle $T_K$. The dashed lines are parallel to $\ell_i$ at distance $\omega_i$.}}\label{fig.KDT}
\end{figure}

Throughout the paper, we will occasionally use the same notation for both objects and their corresponding values, with a slight abuse of notation. For instance, the symbol $s$ may refer to both a side and the length of a triangle's side; similarly, $\omega$ may denote both the height of a triangle and its width (i.e., the length of the height). Additionally, we will use the same symbol $\omega$ to represent both the width function and the width of a set, and so forth for the other shape functionals. For brevity, we will also omit the subscript $E$ in the asymmetries \eqref{alphaT} and \eqref{AT}, simply denoting them as $\alpha$ and $\mathcal{A}$. 

A natural topology in the class of convex bodies is obtained via the \emph{Hausdorff distance} $d_{\mathcal H}$. Given a planar convex body $K$ the $\e$-tubular neighborhood of $K$ is $(K)_\e:=\{x \in \mathbb R^2 \ :\  \mathrm{dist}(x,K) \leq \e \ \}$, namely $(K)_\e = K \oplus B_\epsilon$ the Minkowski sum of $K$ and a disk $B_\epsilon$ of radius $\epsilon$ (see e.g., \cite[Chapter 3]{rolf}). Then the Hausdorff distance of two planar convex bodies $K$ and $J$ is defined as
$$
d_{\mathcal H}(K,J):=\min\{\e > 0 \ :\ K \subset (J)_\e\,,\ J\subset (K)_\e\}.
$$
Roughly speaking this distance measures the smallest enlargement of a set in order to include the other.  We will also need the $\e$-retraction of $K$ defined by
$$
(K)_{-\e}:=\{x\in K \ : \  \mathrm{dist}(x,\partial K)\geq \e  \}.
$$
This is also known in the literature as {\it inner parallel set} and can be also written as the Minkowski difference $(K)_{-\epsilon}=K \sim B_\epsilon$. 
The set $(K)_{-\epsilon}$ is not empty provided that $\epsilon \leq r(K)$. Notice that the Minkowski addition and subtraction are not one the inverse of the other, in the following sense: $(K_\epsilon)_{-\epsilon}= K$ while $(K_{-\epsilon})_\epsilon \subset K$ and the inclusion can be strict (see, e.g. \cite[pp. 146-147]{rolf}).
For an equilateral triangle $E$, we have the following lemma.
\begin{lemma}%\label{parallelE} 
Let $E$ be an equilateral triangle. Let $\epsilon < r(E)$. Then
\begin{equation}\label{contE}
(E)_\epsilon \subset \left(1+\frac{\e}{r(E)}\right)E, \quad (E)_{-\e}=\left(1-\frac{\e}{r(E)}\right)E.
\end{equation} 
\end{lemma}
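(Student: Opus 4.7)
The plan is to place the incenter of $E$ at the origin and represent $E$ as the intersection of three closed half-planes,
\[
E = \bigcap_{i=1}^3 \{x \in \re^2 : x \cdot n_i \le r\},
\]
where $r = r(E)$ and $n_1, n_2, n_3$ are the outward unit normals to the three sides, each of which lies at distance $r$ from the origin. With this choice of coordinates, the homothety $x \mapsto \lambda x$ keeps the incenter fixed and maps $E$ to the equilateral triangle of inradius $\lambda r$ with the same normals; in particular $(1 \pm \e/r)E$ will denote such a homothetic copy.

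For the identity $(E)_{-\e} = (1-\e/r)E$, I would exploit the Minkowski-difference description $(E)_{-\e} = E \sim B_\e$ recalled in the excerpt. A point $x$ belongs to this set iff $x + B_\e \subseteq E$, which by maximising $b \cdot n_i$ over $|b| \le \e$ is equivalent to $x \cdot n_i \le r - \e$ for every $i$. Therefore
\[
(E)_{-\e} = \bigcap_{i=1}^3 \{x \in \re^2 : x \cdot n_i \le r - \e\},
\]
and the right-hand side is exactly the equilateral triangle centered at the origin with inradius $r-\e$, namely $(1-\e/r)E$. The hypothesis $\e < r$ guarantees non-degeneracy.

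For the outer inclusion on $(E)_\e = E \oplus B_\e$, I would use linearity in the analogous way. For any $p = y + b$ with $y \in E$ and $|b| \le \e$, Cauchy--Schwarz (using $|n_i|=1$) yields
\[
p \cdot n_i = y \cdot n_i + b \cdot n_i \le r + |b| \le r + \e, \qquad i=1,2,3,
\]
so $p$ lies in every half-plane $\{x \cdot n_i \le r + \e\}$ and therefore in their common intersection, which is the equilateral triangle $(1 + \e/r)E$. Notably, no separate analysis at the vertices of $E$ is required: the circular caps of the tube $(E)_\e$ satisfy the three half-plane bounds automatically through this dot-product estimate.

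The only real obstacle is notational. One has to agree once and for all that the symbols $(1 \pm \e/r)E$ stand for the homothetic copies of $E$ with the same incenter, which is ensured by placing the incenter at the origin. With that convention in force, both statements collapse to the two elementary half-plane computations above.
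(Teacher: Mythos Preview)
Your proof is correct. Both arguments reach the same conclusion, but the routes differ in flavor. The paper argues geometrically: it describes the boundary of $(E)_\e$ as three segments joined by circular arcs, observes this sits inside a concentric equilateral triangle $\lambda E$, and then recovers $\lambda$ by equating $r(\lambda E)=\lambda r(E)$ with $r(E)+\e$ (and similarly for the retraction, after asserting that $(E)_{-\e}$ is visibly an equilateral triangle). You instead fix coordinates with the incenter at the origin, write $E$ as an intersection of three half-planes $\{x\cdot n_i\le r\}$, and reduce both statements to one-line dot-product estimates: the Minkowski sum shifts each supporting line outward by at most $\e$, the Minkowski difference shifts each inward by exactly $\e$. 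Your approach is a bit more self-contained---it does not appeal to a verbal description of the boundary or to the claim ``it is immediate to check that $(E)_{-\e}$ is an equilateral triangle''---and it makes transparent why no special treatment of the rounded corners is needed for the inclusion. The paper's version, on the other hand, keeps the geometric picture front and center and avoids introducing coordinates. Either way the content is the same elementary fact about parallel supporting lines of a triangle.
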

\begin{proof}
The first statement follows using the structure of the Minkowski sum $(E)_\epsilon=E\oplus B_\epsilon$: the boundary is made of 3 segments parallel to the sides of $E$, with the same length of the sides of $E$, and whose endpoints are joined by 3 arcs of circle of radius $\epsilon$. The set is contained into an equilateral triangle, concentric to $E$, of the form $\lambda E$ for some $\lambda>1$. On the one hand, by construction, the inradius of $\lambda E$ is $r(E) + \epsilon$; on the other hand, by the homogeneity of the inradius, we have $r(\lambda E) = \lambda r(E)$. Therefore we deduce
$$
\lambda = 1 + \frac{\epsilon}{r(E)}.
$$
The proof of the second statement is similar: it is immediate to check that $(E)_{-\epsilon}$ is an equilateral triangle, concentric to $E$, of the form $\mu E$, for some $0 <\mu <1$. Then, we use $r(\mu E)=\mu r(E) = r(E) - \epsilon$ to deduce the value of the scaling factor $\mu$.
\end{proof}

We begin by providing the well-posedness of these asymmetries.

\begin{lemma}\label{lem.min}
The asymmetries $\alpha$ and $\mathcal A$ are well-defined, namely the minima in \eqref{alphaT} and \eqref{AT} are attained. 
\end{lemma}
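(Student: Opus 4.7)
The plan is to apply the direct method of the calculus of variations. The first step is to parameterize the competitors. Every equilateral triangle $E$ with $\omega(E) = \omega(K) =: \omega_0$ is uniquely determined, modulo its threefold rotational symmetry, by a pair $(p, \theta) \in \mathbb{R}^2 \times [0, 2\pi/3)$, where $p$ is its centroid and $\theta$ is a rotation angle; denote the corresponding triangle by $E(p,\theta)$. Note that the map $(p,\theta) \mapsto E(p,\theta)$ is continuous with respect to the Hausdorff distance. Consequently, $(p,\theta) \mapsto d_{\mathcal{H}}(K, E(p,\theta))$ is continuous because $d_{\mathcal{H}}$ is a metric. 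For the other functional, I would use the fact that Hausdorff convergence of convex bodies (which have negligible boundaries) implies convergence in measure of the corresponding indicators, so that $(p,\theta) \mapsto |K \Delta E(p,\theta)|$ is continuous as well.

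Next, I would establish coercivity. Since $\theta$ already ranges in the compact interval $[0, 2\pi/3]$, it suffices to bound the centroids of a minimizing sequence. For the Hausdorff functional, note that $E(p,\theta)$ has fixed diameter (determined by $\omega_0$), so as $|p|\to\infty$ the triangle $E(p,\theta)$ drifts to infinity and $d_{\mathcal{H}}(K, E(p,\theta))\to\infty$; hence any minimizing sequence has bounded $|p|$. For the symmetric-difference functional, fix any $E_0$ admissible whose centroid lies in the non-empty interior of $K$, so that $|K\cap E_0|>0$ and therefore $|K\Delta E_0| < |K| + |E_0|$; on the other hand, as $|p|\to\infty$, eventually $K\cap E(p,\theta) = \emptyset$, forcing $|K\Delta E(p,\theta)| \to |K|+|E(p,\theta)| = |K|+|E_0|$, strictly larger than the value at $E_0$. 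In both cases, a minimizing sequence $(p_n,\theta_n)$ must stay in a bounded region of the parameter space.

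Finally, by compactness of the bounded parameter region I can extract a convergent subsequence $(p_{n_k},\theta_{n_k})\to (p^*,\theta^*)$; the limit is admissible (the class of admissible triangles is closed) and by continuity $E(p^*,\theta^*)$ attains the minimum in \eqref{alphaT}, respectively in \eqref{AT}. The step I expect to require the most care is the continuity of the symmetric-difference functional under Hausdorff convergence; if one prefers to avoid this point, it can be bypassed by invoking the Blaschke selection theorem applied to the class of admissible triangles whose centroid lies in a suitably chosen large ball, together with the lower semicontinuity of $E \mapsto |K\Delta E|$ on that compact subclass.
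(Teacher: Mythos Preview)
Your argument is correct and follows essentially the same route as the paper: parameterize the admissible equilateral triangles by centroid and rotation angle, observe continuity of the objective, restrict to a compact parameter region by a coercivity/non-intersection argument, and conclude by compactness. The paper treats only $\alpha$ explicitly and declares the case of $\mathcal A$ ``similar''; your extra care about the continuity of $(p,\theta)\mapsto |K\Delta E(p,\theta)|$ (via Hausdorff convergence of convex bodies implying $L^1$ convergence of indicators) fills in exactly the point the paper leaves to the reader.
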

\begin{proof}
We give the proof for $\alpha$, since that for $\mathcal A$ is similar. Let $K$ be a fixed planar convex body. 
The equilateral triangles in the plane of width $\omega(K)$ just depend on the position $x\in \mathbb R^2$ of the baricenter and on the angle $\theta \in [0,2\pi]$ of one axis of symmetry with respect to the horizontal line. Let $E_{x,\theta}$ denote such a triangle. Then the minimum problem defining $\alpha$ in \eqref{alphaT} is equivalent to the following one:
$$
\min\left\{d_{\mathcal H}(K,E_{x,\theta})\ :\ x\in \mathbb R^2\,,\ \theta \in [0,2\pi]\right\}.
$$
Since the triangles $E_{x,\theta}$ that do not intersect $K$ are certainly not optimal, we can restrict $x$ to a compact set (for example, we can take $x$ in a disk of radius proportional to the diameter of $K$, which is finite due to the boundedness of $K$). Therefore, the minimization problem is on a compact set, the function $(x,\theta)\mapsto d_{\mathcal H}(K, E_{x,\theta})$ is continuous and the existence of minima follows.
\end{proof}

Our main objective is to derive estimates on the asymmetries of both generic and specific convex bodies. To achieve this goal, we establish estimates on the Hausdorff distance of certain convex bodies.

\begin{lemma}\label{lem.triangle}
Let $T$ be a triangle of width $\omega(T)$ and of smallest interior angle $\gamma_T$. Let $E$ be an equilateral triangle of width $\omega(E)\leq \omega(T)$ aligned to the width $\omega(T)$ of $T$ and with the corresponding base on the largest segment of $T$. Then 
\[
d_\mathcal H (T, E)\leq \frac{\sqrt{3}}{\tan \gamma_T}\omega(T)-\omega(E).
\]
\end{lemma}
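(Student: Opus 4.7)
The plan is to sandwich $T$ inside a suitable enclosing equilateral triangle $\tilde E$ of width $\tilde h := \frac{\sqrt{3}}{\tan\gamma_T}\omega(T)$, and to place $E$ concentric with $\tilde E$. First I would fix coordinates with the largest side of $T$ on the $x$-axis: write $A=(0,0)$, $B=(s_1,0)$, $C=(x_C,\omega(T))$, after possibly reflecting so that the smallest angle $\gamma_T$ sits at $A$ (and let $\alpha_B$ denote the angle of $T$ at $B$); then $x_C=\omega(T)\cot\gamma_T$. I would take $\tilde E$ to be the equilateral triangle with base from $A$ to $(2x_C,0)$, of width $\tilde h=\sqrt{3}\,x_C$. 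Two elementary checks, using $\alpha_B\ge\gamma_T$ and $\gamma_T\le\pi/3$, give $T\subset\tilde E$: the vertex $B$ sits on the base of $\tilde E$ because $s_1=\omega(T)(\cot\gamma_T+\cot\alpha_B)\le 2x_C$, while $C$ lies on the altitude of $\tilde E$ at height $\omega(T)\le \tilde h$.

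Next I would place $E$ with base on the $x$-axis and axis of symmetry $x=x_C$, so that $E$ has apex $(x_C,\omega(E))$ and base from $(x_C-\omega(E)/\sqrt{3},0)$ to $(x_C+\omega(E)/\sqrt{3},0)$. The chain $\omega(E)\le\omega(T)\le\tilde h$ guarantees $E\subset\tilde E$. I would then estimate
\[
d_{\mathcal H}(T,E)=\max\Bigl\{\sup_{p\in T}\mathrm{dist}(p,E),\ \sup_{q\in E}\mathrm{dist}(q,T)\Bigr\}.
\]
Using $T\subset\tilde E$ together with a direct vertex-comparison for the two concentric equilateral triangles $\tilde E$ and $E$,
\[
\sup_{p\in T}\mathrm{dist}(p,E)\le \sup_{p\in\tilde E}\mathrm{dist}(p,E)=d_{\mathcal H}(\tilde E,E)=\tilde h-\omega(E),
\]
the last supremum being realized at the apex of $\tilde E$.

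For the second supremum I would use convexity of $\mathrm{dist}(\cdot,T)$: its maximum on $E$ is attained at a vertex of $E$. A short check in the chosen coordinates shows that the apex and the left base-vertex of $E$ always lie in $T$ (contributing $0$), while the right base-vertex $(x_C+\omega(E)/\sqrt{3},0)$ lies in $AB\subset T$ unless $\omega(E)>\sqrt{3}\,\omega(T)\cot\alpha_B$, in which case its distance to $T$ equals $\omega(E)/\sqrt{3}-\omega(T)\cot\alpha_B$. Using $\omega(E)\le\omega(T)$, this is at most $\tilde h-\omega(E)$ as soon as
\[
\sqrt{3}\cot\gamma_T+\cot\alpha_B\ \ge\ 1+\frac{1}{\sqrt{3}}.
\]

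The main obstacle is this trigonometric inequality. I would establish it by observing that its left-hand side is strictly decreasing in each of its arguments, while the feasibility constraints $\gamma_T\le\alpha_B\le(\pi-\gamma_T)/2$ (together with the resulting $\gamma_T\le\pi/3$) jointly reach their upper corner only at $(\gamma_T,\alpha_B)=(\pi/3,\pi/3)$, where the left-hand side equals exactly $1+1/\sqrt{3}$. Combining the two supremum bounds then yields $d_{\mathcal H}(T,E)\le\tilde h-\omega(E)$, as claimed.
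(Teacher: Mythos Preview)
Your argument is correct, but the route differs from the paper's. The paper bounds $d_{\mathcal H}(T,E)$ by $\max_j \mathrm{dist}(V_j,W_j)$ (valid for any two triangles, since each $W_j$ lies in the $\epsilon$-neighborhood of $T$ and vice versa, with $\epsilon=\max_j\mathrm{dist}(V_j,W_j)$), and then compares the three vertex distances directly, showing that $\sqrt{3}\,\mathrm{dist}(V_3,W_3)$ dominates; the final quantity $\sqrt{3}(s_2\cos\gamma_3-\omega(E)/\sqrt{3})$ is exactly $\sqrt{3}\,\omega(T)/\tan\gamma_T-\omega(E)$. Your approach instead builds the enclosing equilateral triangle $\tilde E$ of width $\tilde h$, splits the Hausdorff distance into its two one-sided suprema, handles the first via the sandwich $T\subset\tilde E$, and reduces the second to a trigonometric inequality on $(\gamma_T,\alpha_B)$. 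Two minor remarks: (i) your $\tilde E$ and $E$ are \emph{not} concentric (their centroids are $(x_C,\tilde h/3)$ and $(x_C,\omega(E)/3)$), but your vertex computation is unaffected since they are homothetic about the common apex axis with base on the same line, and the apex of $\tilde E$ indeed realizes $\sup_{p\in\tilde E}\mathrm{dist}(p,E)=\tilde h-\omega(E)$; (ii) the final minimization is not quite ``go to the upper corner'' because the feasible region is not a box: on the boundary $\alpha_B=(\pi-\gamma_T)/2$ one must still check that $\gamma_T\mapsto \sqrt{3}\cot\gamma_T+\cot((\pi-\gamma_T)/2)$ is decreasing on $(0,\pi/3]$, which follows since $\sqrt{3}\csc^2\gamma_T\ge 4\sqrt{3}/3>2/3\ge \tfrac12\csc^2((\pi-\gamma_T)/2)$ there. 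With these two clarifications your proof goes through. The paper's vertex-pairing is shorter and avoids the auxiliary $\tilde E$ and the trigonometric minimization; your sandwich argument, on the other hand, separates the two halves of the Hausdorff distance cleanly and makes transparent why the apex contribution $\tilde h-\omega(E)$ governs the bound.
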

\begin{proof}
We denote by $h_1\leq h_2\leq h_3$ the heights of $T$ of the corresponding sides $s_1\geq s_2\geq s_3$. For $1\leq j\leq 3$ let $V_j$ be the vertex of $T$ opposite to $s_j$ and $\gamma_j$ be the angle of $T$ at $V_j$ (we use for $T$ the same notation introduced before for $T_K$). In this simplified notation $h_1=\omega(T)$ and $\gamma_3=\gamma_T$. Moreover, we denote $\omega=\omega(E)$.  
Let $W_1$ be the vertex of $E$ on $h_1$ and $W_2, W_3$ the remaining ones so that
$$
d_{\mathcal{H}}(T,E) \leq \max_{1\leq j\leq 3}\mathrm{dist}(V_j,W_j).
$$
Let $O$ be the projection of $V_1$ on $s_1$ (an endpoint of $h_1$). See Figure~\ref{fig.triangles} for a picture of this construction. 

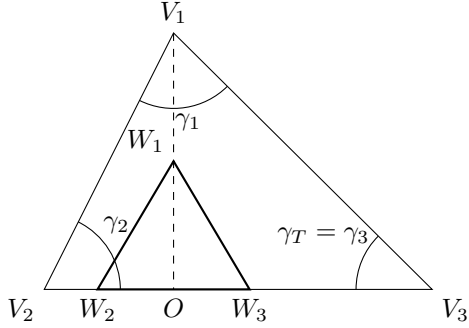
\begin{figure}[t]
        \centering
        \begin{tikzpicture}[scale=1.7]
\coordinate (W1) at (0,1);
\coordinate (W2) at (-1/1.7,0);
\coordinate (W3) at (1/1.7,0);
\coordinate (V1) at (0,2);
\coordinate (V2) at (-1,0);
\coordinate (V3) at (2,0);
\coordinate (O) at (0,0);
\draw[thick] (W1) -- (W2) -- (W3) -- cycle;
\draw (V1) -- (V2) -- (V3) -- cycle;
\draw[dashed] (V1) -- (O);
\node[below] at (O) {$O$};
\node[above left] at (W1) {$W_1$};
\node[below] at (W2) {$W_2$};
\node[below] at (W3) {$W_3$};
\node[above] at (V1) {$V_1$};
\node[below left] at (V2) {$V_2$};
\node[below right] at (V3) {$V_3$};
\draw pic[draw=black, angle eccentricity=1.2, angle radius=10mm]  {angle=V3--V2--V1};
\node at ($(W2)!0.25!(V1)$) {$\gamma_2$};
\draw pic[draw=black, angle eccentricity=1.2, angle radius=10mm] {angle=V1--V3--V2};
\node at ($(W1)!0.58!(V3)$) {$\gamma_T=\gamma_3$};
\draw pic["$\gamma_1$", draw=black, angle eccentricity=1.2, angle radius=10mm] {angle=V2--V1--V3};
\end{tikzpicture}
\caption{The triangles $T$ and $E$ of Lemma~\ref{lem.triangle}.}\label{fig.triangles}
\end{figure}

Using the fact that $V_1OV_2$ and $V_1OV_3$ are right triangles, we may infer that
$$
 \mathrm{dist}(V_1,W_1) = h_1-\omega
$$
and
$$
 \mathrm{dist}(V_2,W_2) = | s_3 \cos \gamma_2 - \omega/\sqrt{3}  |, \quad  \mathrm{dist}(V_3,W_3) = | s_2 \cos \gamma_3 - \omega/\sqrt{3}  |.
$$
We claim that 
\begin{equation}\label{dueclaim}
\mathrm{dist}(V_1,W_1)  \leq  \sqrt{3}\,\mathrm{dist}(V_3,W_3)\quad \hbox{and}\quad \mathrm{dist}(V_2,W_2)  \leq  \mathrm{dist}(V_3,W_3).
\end{equation}
Indeed, since by construction $\gamma_3$ is the smallest interior angle of $T$ then $\gamma_3\leq \pi/3$ and we have
\begin{equation*}%\label{sincos}
h_1= s_2 \sin \gamma_3 \leq s_2 \sin (\pi/3) = s_2 \frac{\sqrt{3}}{2}, \quad \cos \gamma_3 \geq \cos(\pi/3) = \frac12. 
\end{equation*}
These two estimates imply that 
$$
s_2 \cos \gamma_3 - \omega/\sqrt{3} \geq s_2/2 - \omega/\sqrt{3} \geq h_1/\sqrt{3} - \omega/\sqrt{3} \geq 0,
$$
and in particular
$$
 \mathrm{dist}(V_3,W_3) =  s_2 \cos \gamma_3 - \omega/\sqrt{3}\geq \mathrm{dist}(V_1,W_1)/\sqrt{3},
$$
that gives the first inequality claimed in \eqref{dueclaim}.
Now, by using $s_3\leq s_2$ and $\cos\gamma_2 \leq \cos \gamma_3$ we obtain
\begin{equation}\label{prooft1}
s_3 \cos \gamma_2 - \omega/\sqrt{3} \leq s_2 \cos \gamma_3 - \omega/\sqrt{3} .
\end{equation}
Moreover, using the fact that the largest angle $\gamma_1$ is at least $\pi/3$, we get
$$
 s_2 \cos \gamma_3 + s_3 \cos \gamma_2  = s_1 \geq \frac{2h_1}{\sqrt{3}} \geq \frac{2\omega}{\sqrt{3}},
$$
which implies
\begin{equation}\label{prooft2}
- (s_3 \cos \gamma_2 - \omega/\sqrt{3} ) \leq   s_2 \cos \gamma_3 - \omega/\sqrt{3}.
\end{equation}
By combining \eqref{prooft1} with \eqref{prooft2}, we also obtain the second inequality claimed in \eqref{dueclaim}. 
Therefore,
\[
d_{\mathcal{H}}(T,E) \leq \sqrt{3}\, \mathrm{dist}(V_3,W_3) 
\]
and the lemma is proved after noticing that $s_2\cos \gamma_3=h_1/\tan\gamma_3$ and recalling that $h_1=\omega(T)$.
\end{proof}

\begin{lemma}\label{lem.convex}
Let $K$ be a planar convex body such that $r(K)< \omega(K)/2$. Then let $T$ be the triangle circumscribed to $K$ of sides $s_1\geq s_2\geq s_3$ of width $\omega(T)$. Then 
\begin{equation}\label{eq.proof1}
d_{\mathcal H}(K,T) \leq  (s_1 - s_3) + s_3\left( \frac{\omega(T)-\omega(K)}{\omega(T)}\right).
\end{equation}
\end{lemma}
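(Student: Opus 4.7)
The plan is the following. Since $K\subset T$, one has $d_{\mathcal H}(K,T)=\sup_{p\in T}\mathrm{dist}(p,K)$, and because $K$ is convex, the map $p\mapsto \mathrm{dist}(p,K)$ is convex; hence its maximum on the triangle $T$ is attained at one of its three vertices. So I only have to bound $\mathrm{dist}(V_i,K)$ at each vertex $V_i$ of $T$ and pick the largest value.

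To bound $\mathrm{dist}(V_i,K)$ I will use the specific point $Q_i\in K$ introduced in the setup: it lies on the supporting line $L_i$ of $K$ parallel to the opposite side $\ell_i$, at distance $\omega_i$ from $\ell_i$. Since $K\subset T$, one has $Q_i\in T\cap L_i$. The homothety centered at $V_i$ with ratio $(h_i-\omega_i)/h_i$, where $h_i$ denotes the height of $T$ from $V_i$, maps the opposite side $\ell_i\cap T$ onto the chord $T\cap L_i$. Consequently the two endpoints of $T\cap L_i$ lie on the sides of $T$ adjacent to $V_i$, at distances $b(h_i-\omega_i)/h_i$ and $c(h_i-\omega_i)/h_i$ from $V_i$, where $b$ and $c$ are the lengths of those two sides. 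This yields
\[
\mathrm{dist}(V_i,K)\leq |V_iQ_i|\leq \max(b,c)\,\frac{h_i-\omega_i}{h_i}\leq \max(b,c)\,\frac{h_i-\omega(K)}{h_i},
\]
where the final step uses $\omega_i\geq \omega(K)$.

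It only remains to maximize this estimate over the three vertices. By the area identity $s_jh_j=2|T|$ (for each side $s_j$ with corresponding height $h_j$), the vertex $V^\ast$ opposite to the shortest side $s_3$ is the one producing the largest bound: for it $\max(b,c)=s_1$ and its height $h_{V^\ast}$ satisfies $s_1/h_{V^\ast}=s_3/\omega(T)$, so the bound rewrites as
\[
s_1-\frac{s_3\,\omega(K)}{\omega(T)}=(s_1-s_3)+s_3\,\frac{\omega(T)-\omega(K)}{\omega(T)},
\]
which is exactly the desired right-hand side. A routine algebraic check, based on the orderings $s_1\geq s_2\geq s_3$ and $h_1\leq h_2\leq h_3$, confirms that the bounds obtained at the other two vertices are no larger. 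The main technical step is the homothety computation identifying the endpoints of $T\cap L_i$; the concluding vertex-by-vertex comparison is straightforward, and I do not foresee significant obstacles.
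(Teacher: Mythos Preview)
Your argument is correct and follows essentially the same route as the paper's own proof: reduce $d_{\mathcal H}(K,T)$ to the maximum of $\mathrm{dist}(V_i,K)$ over the three vertices, bound each via the chord $T\cap L_i$ using a similar-triangles (equivalently, homothety) computation together with $\omega_i\geq\omega(K)$, and then identify $s_1(h_3-\omega)/h_3$ as the largest of the three bounds before rewriting it with $s_1h_1=s_3h_3$. Your justification of the vertex reduction via convexity of $p\mapsto\mathrm{dist}(p,K)$ is slightly more explicit than the paper's, but the substance is identical.
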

\begin{proof}
By convexity of $K$ and since $K\subset T$, the Hausdorff distance between $K$ and $T$ satisfies
$$
d_{\mathcal H}(K,T) \leq \max_{1\leq i\leq 3} \mathrm{dist}(V_i, Q_i),
$$
where $V_i$ are the vertexes of $T$ and $Q_i$ are the boundary points of $K$ introduced at the beginning of this section, see Figure~\ref{fig.KDT}.
The case that gives greater values in the previous maximum is when the point $Q_i$ lies on the sides of $T$ and when the directional width $\omega_i$ of $K$ is $\omega$, for every $1\leq i \leq 3$.
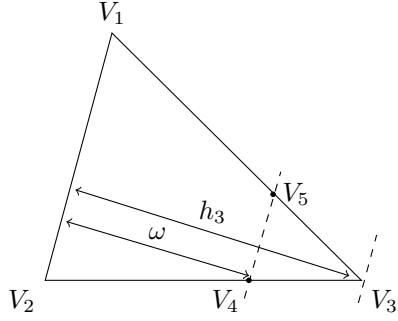
\begin{figure}[t]
        \centering
        \begin{tikzpicture}[scale=1.2]
\coordinate (V1) at (120:2);
\coordinate (V2) at (210:2);
\coordinate (V3) at (-30:2);
\coordinate (V4) at (0.5,-1);
\coordinate (V5) at (0.765,-0.05);
\draw (V1) -- (V2) -- (V3) -- cycle;
\node[above] at (V1) {$V_1$};
\node[below left] at (V2) {$V_2$};
\node[below right] at (V3) {$V_3$};
\node[below left] at (V4) {$V_4$};
\node[right] at (V5) {$V_5$};
\draw[fill] (V4) circle (0.7pt);
\draw[fill] (V5) circle (0.7pt);
\draw[dashed] (0.45,-1.2) -- (0.85,0.2);
\draw[dashed] (1.7,-1.25) -- (1.9,-0.5);
    \draw[<->] (-1.4,0) -- (1.6,-0.95) node[midway, above] {$h_3$};
    \draw[<->] (-1.5,-0.35) -- (0.5,-0.95) node[midway, above] {$\omega$};
\end{tikzpicture}
  \caption{The two similar triangles $V_1V_2V_3$ and $V_5V_4V_3$.}\label{fig.w} 
\end{figure}

Let us estimate the distance between $V_3$ and $Q_3$. To this aim, we consider the line parallel to the side $s_3$ at distance $\omega$: it cuts the sides $s_1$ and $s_2$ in two points, say $V_4$ and $V_5$. The two triangles $V_1V_2V_3$ and $V_5V_4V_3$ are similar (since they have the same angles), see Figure \ref{fig.w}. Therefore
${V_5V_3}/{(h_3-\omega)}={s_2}/{h_3}$ which is equivalent to 
\[
V_5V_3=\frac{s_2(h_3-\omega)}{h_3}.
\]
Combining this to the analogous equality $V_4V_3={s_1(h_3-\omega)}/{h_3}$ we have 
$$
\dist(V_3, Q_3) \leq \max\left\{ s_2\frac{h_3 - \omega}{h_3},   s_1\frac{h_3 - \omega }{h_3} \right\}.
$$ 
Now, by arguing in the same way for all the vertices, we obtain 
\begin{equation}\label{quasi}
d_{\mathcal H}(K,T) \leq \max_{1\leq i\neq j\leq 3}\left\{s_i\frac{h_j - \omega}{h_j}\right\}.
\end{equation}
By using $s_1\geq s_2\geq s_3$ and $h_3\geq h_2\geq h_1$, with the monotonicity of $x\mapsto (x-\omega)/x$, we infer that
$$
\max_{1\leq i\neq j\leq 3}\left\{s_i\frac{h_j - \omega}{h_j}\right\} = s_1\frac{h_3 - \omega}{h_3} .
$$
Inserting this equality in \eqref{quasi} and by using $s_1 h_1=s_3 h_3$, yields \eqref{eq.proof1}.
\end{proof}

\begin{proposition}[Asymmetry of a convex body]\label{asy.convex}
Let $K$ be a planar convex body such that $r(K)< \omega(K)/2$. Let $T$ be the triangle circumscribed to $K$ of sides $s_1\geq s_2\geq s_3$ and of smaller interior angle $\gamma_T$. Then we have
\[
\alpha(K)\leq  \frac{(s_1 - s_3)}{\omega(K)} + \frac{s_3(\omega(T)-\omega(K))}{\omega(K)\omega(T)}+\frac{\sqrt{3}}{\tan \gamma_T}\frac{\omega(T)}{\omega(K)}-1.
\]
\end{proposition}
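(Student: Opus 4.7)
The plan is to construct a specific equilateral triangle $E$ of width $\omega(K)$, use it as a competitor in the minimization problem \eqref{alphaT} defining $\alpha(K)$, and bound $d_{\mathcal H}(K,E)$ from above via the triangle inequality for the Hausdorff distance, passing through the circumscribed triangle $T$. Since both preparatory lemmas (Lemma~\ref{lem.convex} and Lemma~\ref{lem.triangle}) have already been established, the proof will essentially be an assembly of these bounds.

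The first step is to specify $E$: I would take $E$ to be the equilateral triangle of width $\omega(E)=\omega(K)$ aligned with $T$ as in the statement of Lemma~\ref{lem.triangle}, i.e.\ with its height parallel to $h_1=\omega(T)$ and its base lying on the longest side $s_1$ of $T$. By \eqref{wrkt} we have $\omega(K)\leq \omega(T)$, so Lemma~\ref{lem.triangle} applies and yields
\[
d_{\mathcal H}(T,E)\leq \frac{\sqrt 3}{\tan\gamma_T}\,\omega(T)-\omega(K).
\]

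The second step applies Lemma~\ref{lem.convex} directly, which under the hypothesis $r(K)<\omega(K)/2$ gives
\[
d_{\mathcal H}(K,T)\leq (s_1-s_3)+s_3\,\frac{\omega(T)-\omega(K)}{\omega(T)}.
\]
Combining these two bounds with the triangle inequality $d_{\mathcal H}(K,E)\leq d_{\mathcal H}(K,T)+d_{\mathcal H}(T,E)$ and dividing by $\omega(E)=\omega(K)$ produces exactly the right-hand side of the claim. Since $E$ is an admissible competitor in \eqref{alphaT}, we conclude $\alpha(K)\leq d_{\mathcal H}(K,E)/\omega(K)$, which is the stated estimate.

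There is no real obstacle here: the only points to verify are that $E$ is admissible (immediate from $\omega(E)=\omega(K)$) and that the hypothesis $\omega(E)\leq\omega(T)$ of Lemma~\ref{lem.triangle} holds, which follows from \eqref{wrkt}. The proof is therefore short and mechanical once the alignment of $E$ with $T$ is fixed.
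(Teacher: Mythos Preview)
Your proposal is correct and follows exactly the paper's own proof: test $\alpha$ with the equilateral triangle $E$ of width $\omega(K)$ aligned to $h_1$ with base on $s_1$, apply the triangle inequality for $d_{\mathcal H}$ through $T$, and invoke Lemma~\ref{lem.convex} and Lemma~\ref{lem.triangle}. Your explicit check that $\omega(E)\leq\omega(T)$ via \eqref{wrkt} is a nice touch that the paper leaves implicit.
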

\begin{proof}
We test $\alpha$ in \eqref{alphaT} with an equilateral triangle $E$ of height $\omega=\omega(K)$ aligned to $h_1$ and corresponding base on $s_1$. By applying the triangular inequality for the Hausdorff distance we obtain
$$
\alpha(K) \leq\frac{\left[ d_{\mathcal H}(K,T) + d_{\mathcal H}(T,E)\right]}{\omega},
$$
which together with Lemma~\ref{lem.convex}  and Lemma~\ref{lem.triangle} yields the estimate in the lemma (notice that $\omega(E)=\omega(K)$).
\end{proof}

\begin{remark}
The estimate in the previous proposition quantifies the asymmetry of a generic convex set in terms of three quantities: the first one measures how closely $K$ resembles to an equilateral triangle, while the second one measures the proximity of the width of $K$ to that of the circumscribed triangle, the third one is a mix of the two and measures both.
\end{remark}

\begin{lemma}\label{TBdist}
Let $D$ and $E$ be, respectively, a disk and an equilateral triangle centered at the same point.  

If $r(D)\leq \omega(E)/2$ then
\begin{equation}\label{small}
\frac{d_{\mathcal H}(D,E)}{\omega(E)} = \frac23 - \frac{r(D)}{\omega(E)}.
\end{equation}
If $r(D)\geq \omega(E)/2$ then 
\begin{equation}\label{big}
\frac{d_{\mathcal H}(D,E)}{\omega(E)} = \frac{r(D)}{\omega(E)} - \frac13.
\end{equation}
In particular, if $r(D)=\omega(E)/2$ then 
\begin{equation}\label{equal}
\frac{d_{\mathcal H}(D,E)}{\omega(E)} = \frac16.
\end{equation}

\end{lemma}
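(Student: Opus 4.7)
The plan is to compute $d_{\mathcal H}(D, E) = \max\{s_1, s_2\}$, where
\[
s_1 := \sup_{x \in D}\dist(x, E), \qquad s_2 := \sup_{y \in E}\dist(y, D),
\]
placing the common center of $D$ and $E$ at the origin. The main geometric input is that, for an equilateral triangle $E$ of width $\omega(E)$, the inradius equals $\omega(E)/3$ (one third of the height) while the circumradius equals $2\omega(E)/3$ (twice the inradius, since centroid, incenter, and circumcenter coincide).

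Computing $s_2$ is immediate: since $D$ is a disk of radius $r(D)$ centered at the origin, one has $\dist(y, D) = (|y| - r(D))_+$, and $|y|$ attains its maximum $2\omega(E)/3$ on $E$ precisely at the vertices, so $s_2 = (2\omega(E)/3 - r(D))_+$. For $s_1$, I would use the three-fold symmetry of $E$ to restrict to a $2\pi/3$-sector around an axis through the origin perpendicular to a chosen side of $E$. In such a sector the closest side of $E$ to any external point is the central one, and the distance from a point to that side is maximized along the perpendicular axis through the origin, where it equals $r(D) - \omega(E)/3$ whenever positive. This yields $s_1 = (r(D) - \omega(E)/3)_+$.

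Since the two numbers $r(D) - \omega(E)/3$ and $2\omega(E)/3 - r(D)$ have positive sum $\omega(E)/3$, at most one of them is negative, so the positive-part operations can be dropped and
\[
d_{\mathcal H}(D, E) = \max\!\left\{r(D) - \frac{\omega(E)}{3},\ \frac{2\omega(E)}{3} - r(D)\right\}.
\]
The two quantities coincide exactly when $r(D) = \omega(E)/2$, with common value $\omega(E)/6$; for $r(D) \le \omega(E)/2$ the second dominates (yielding \eqref{small} after division by $\omega(E)$), and for $r(D) \ge \omega(E)/2$ the first dominates (yielding \eqref{big}); the crossover case gives \eqref{equal}.

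The only delicate point is the sectorwise symmetry reduction used for $s_1$. If one prefers to avoid it, the same conclusion follows at once from the classical support-function formula $d_{\mathcal H}(A,B) = \sup_\theta |h_A(\theta) - h_B(\theta)|$ for convex bodies, applied to $h_D \equiv r(D)$ and to $h_E$, whose extremes $\omega(E)/3$ and $2\omega(E)/3$ are attained in the perpendicular-to-side and vertex directions respectively.
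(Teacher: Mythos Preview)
Your proof is correct and follows essentially the same route as the paper: both compute the two directed Hausdorff distances (your $s_1,s_2$ are the paper's $\epsilon_1,\epsilon_2$) using that the incircle and circumcircle of $E$ have radii $\omega(E)/3$ and $2\omega(E)/3$, then take the maximum and compare. You are slightly more careful with the positive-part bookkeeping, and your support-function remark is a clean alternative not in the paper, but the underlying argument is the same.
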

\begin{proof}
By using that $2r(D)=\omega(D)$ we have
$$
\epsilon_1:=\min\{\epsilon>0 \ :\ D\subset (E)_\epsilon\} = \omega(D) - r(D) - r(E) = r(D)-\frac{\omega(E)}{3}. 
$$
Similarly, by using that $r(E)=\omega(E)/3$,  we have
$$
\epsilon_2:=\min\{\epsilon>0 \ :\ E\subset (D)_\epsilon\} = \omega(E) - r(E) - r(D)= \frac23\omega(E)-r(D).
$$
Then, by definition of the Hausdorff distance, we obtain
$$
d_{\mathcal H}(D,E)= \max\left\{\epsilon_1, \epsilon_2\right\}.
$$
It is immediate to check that $\epsilon_1\leq \epsilon_2$ is equivalent to 
$r(D) \leq w(E)/2$, namely 
$$
\displaystyle{\max\left\{\epsilon_1, \epsilon_2\right\} =\left\{
\begin{array}{lll}
{\epsilon_2} =2\omega(E)/3 -  r(D) \quad &\hbox{if }r(D) \leq \omega(E)/2
\\
{\epsilon_1} = r(D) -\omega(E)/3 \quad &\hbox{if }r(D) \geq \omega(E)/2
\end{array}
\right.
}
$$
and both give $\omega(E)/6$ in the common case $r(D)=\omega(E)/2$.
\end{proof}

Notice that in the previous lemma we do not assumed $D$ and $E$ of same width.
We use this result in the following to compute the asymmetry of disks.

\begin{proposition}[Asymmetry of disks]\label{unsesto} For every disk $D$ we have
$$\alpha(D)=\frac16.$$
\end{proposition}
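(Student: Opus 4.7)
The plan is to establish matching upper and lower bounds of $1/6$ for $\alpha(D)$.

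For the upper bound, I would test the minimum in \eqref{alphaT} with the equilateral triangle $E_0$ concentric with $D$ and of width $\omega(E_0) = \omega(D)$. Since $\omega(D) = 2r(D)$, we have exactly $r(D) = \omega(E_0)/2$, so we are in the borderline case \eqref{equal} of Lemma~\ref{TBdist}. That lemma gives $d_{\mathcal H}(D, E_0)/\omega(E_0) = 1/6$, whence $\alpha(D) \leq 1/6$.

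For the lower bound, let $E$ be an arbitrary equilateral triangle with $\omega(E) = \omega(D) = 2r$, let $o$ and $o'$ denote the centers of $D$ and $E$ respectively, and set $c := |o - o'|$. The three vertices $V_1, V_2, V_3$ of $E$ all lie at distance $2r(E) = 2\omega(E)/3 = 4r/3$ from $o'$, and the corresponding unit vectors $u_i := (V_i - o')/(4r/3)$ are $120^\circ$ apart, so $\sum_{i=1}^3 u_i = 0$. Expanding $|V_i - o|^2 = |o' - o|^2 + 2(o'-o)\cdot(V_i - o') + |V_i - o'|^2$ and summing over $i$ kills the cross term and gives
\[
\sum_{i=1}^{3}|V_i - o|^2 \;=\; 3c^2 + 3\left(\tfrac{4r}{3}\right)^2 \;\geq\; 3\left(\tfrac{4r}{3}\right)^2,
\]
so at least one vertex $V_i$ satisfies $|V_i - o| \geq 4r/3$, and hence $\dist(V_i, D) \geq 4r/3 - r = r/3$. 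Any $\e$ with $E \subset (D)_\e$ must be at least this large, so $d_{\mathcal H}(D, E) \geq r/3 = \omega(E)/6$. Taking the infimum over $E$ yields $\alpha(D) \geq 1/6$.

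Combining the two bounds gives $\alpha(D) = 1/6$. The only nontrivial step is the lower bound; the main obstacle — which is quite mild — is realizing that the farthest vertex of $E$ from $o$ must lie at distance at least $4r/3$ regardless of how $E$ is translated or rotated, and this is precisely what the identity $\sum_i u_i = 0$ together with the $\max \geq \mathrm{mean}$ principle delivers.
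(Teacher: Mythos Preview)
Your proof is correct and follows the same overall structure as the paper: the upper bound is obtained exactly as in the paper by testing with the concentric equilateral triangle and invoking \eqref{equal}, and the lower bound amounts to the same geometric fact, namely that an equilateral triangle of width $\omega(D)$ cannot fit inside a disk of radius smaller than $\tfrac{2}{3}\omega(D)$.

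The only genuine difference is in how the lower bound is justified. The paper argues via the minimizer $E_2$ and the inclusion $E_2\subset (D)_\alpha$, then appeals to the fact that the smallest enclosing disk of $E_2$ is its circumdisk of radius $2/3$, forcing $1/2+\alpha\geq 2/3$. Your averaging identity $\sum_i |V_i-o|^2=3c^2+3(4r/3)^2$ gives a self-contained elementary proof of this same fact (any point has a vertex at distance $\geq 4r/3$), without invoking the existence of a minimizer or the minimal-enclosing-disk characterization. Your route is a bit more explicit; the paper's is a bit shorter once one takes the circumradius property for granted.
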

\begin{proof}
By translation and scale invariances of the asymmetry $\alpha(\cdot)$, we may assume $D$ centered at the origin of width $\omega(D)=1$. Then we consider an equilateral triangle $E_1$ centered at the origin of same width $\omega(E_1)=1$. Since $r(D)=1/2$ and $\omega(E_1)=1$, we can apply \eqref{equal} in Lemma~\ref{TBdist} to $D$ and $E_1$ to deduce $d_{\mathcal H}(D,E_1) = 1/6$ and in particular that $$\alpha(D)\leq \frac16.$$ 
By Lemma~\ref{lem.min}, there exists an equilateral triangle $E_2$ of width 1 such that
$$
d_{\mathcal H}(D,E_2)=\alpha(D) \leq \frac16.
$$
Let $\alpha=\alpha(D)$, thus by definition of the Hausdorff distance, $E_2$ is contained in the disk $ (B)_\alpha$ centered at the origin of radius $1/2 + \alpha$. Since $w(E_2)=1$ the smallest disk that contains $E_2$ is a circumscribed disk, with radius $2/3$.  
Therefore, the radius $1/2+\alpha$ must necessarily be greater than $2/3$ and this gives the reverse inequality
\[
\alpha(D)\geq 1/6,
\]
concluding the proof of the proposition.
\end{proof}

We conclude the section by estimating the Fraenkel asymmetry in terms of the Hausdorff asymmetry. This will allow us to obtain a quantitative version of the P\'al inequality in terms of the Fraenkel asymmetry.

\begin{proposition}\label{equivalence1}
For every planar convex body $K$, there holds
$$
\mathcal A(K) \leq   \left(3\sqrt{3} +2 \right) \alpha(K).
$$
\end{proposition}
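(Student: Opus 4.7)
The plan is to feed the triangle $E$ that realises the minimum in $\alpha(K)$ into the definition of $\mathcal{A}(K)$. Setting $\alpha:=\alpha(K)$ and $\omega:=\omega(K)=\omega(E)$, the equality $d_{\mathcal H}(K,E)=\alpha\omega$ translates into the two inclusions $K\subset (E)_{\alpha\omega}$ and $E\subset (K)_{\alpha\omega}$, and I need only bound $|K\Delta E|/\omega^2$. I split the analysis according to whether $\alpha<1/3$ or $\alpha\geq 1/3$.

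If $\alpha<1/3$, then $\alpha\omega<r(E)=\omega/3$ and \eqref{contE} applies. The outer inclusion $K\subset (E)_{\alpha\omega}\subset (1+3\alpha)E$ is immediate, while the crucial inner inclusion $(1-3\alpha)E=(E)_{-\alpha\omega}\subset K$ I would prove by a supporting line argument: if $x\in (E)_{-\alpha\omega}\setminus K$, convexity of $K$ yields a supporting line $\ell$ of $K$ strictly separating $x$ from $K$; taking $\nu$ the unit normal to $\ell$ pointing from $K$ towards $x$, the point $x':=x+\alpha\omega\,\nu$ lies in $\overline{B(x,\alpha\omega)}\subset E$ but satisfies $\mathrm{dist}(x',K)\geq\mathrm{dist}(x,\ell)+\alpha\omega>\alpha\omega$, contradicting $E\subset (K)_{\alpha\omega}$. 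From $(1-3\alpha)E\subset K\subset (1+3\alpha)E$ I conclude
\[
|K\Delta E|\leq \bigl[(1+3\alpha)^2-(1-3\alpha)^2\bigr]|E|=12\alpha\,|E|=4\sqrt{3}\,\alpha\,\omega^2,
\]
hence $\mathcal{A}(K)\leq 4\sqrt{3}\,\alpha\leq (3\sqrt{3}+2)\alpha$.

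If $\alpha\geq 1/3$, the scaling argument collapses because $(1-3\alpha)E$ degenerates, so I switch to a diameter estimate. From $K\subset (E)_{\alpha\omega}$ I obtain $\mathrm{diam}(K)\leq \mathrm{diam}(E)+2\alpha\omega=2\omega/\sqrt{3}+2\alpha\omega$, and since any convex body of width $\omega$ fits into a $\omega\times \mathrm{diam}(K)$ rectangle I have $|K|\leq \omega\,\mathrm{diam}(K)\leq (2/\sqrt{3}+2\alpha)\omega^2$. Combined with $|E|=\omega^2/\sqrt{3}$ this gives
\[
\mathcal{A}(K)\leq \frac{|K|+|E|}{\omega^2}\leq \sqrt{3}+2\alpha\leq (3\sqrt{3}+2)\alpha,
\]
the last step relying on $\sqrt{3}\leq 3\sqrt{3}\,\alpha$, which is exactly the regime $\alpha\geq 1/3$.

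The main obstacle is the inner inclusion $(E)_{-\alpha\omega}\subset K$: it is the only place where the convexity of $K$ is genuinely needed, and it is the bridge transferring the one-sided Hausdorff information $E\subset (K)_{\alpha\omega}$ into an actual containment of a scaled copy of $E$ inside $K$. Everything else is an assembly of elementary estimates together with the inclusions collected in \eqref{contE}.
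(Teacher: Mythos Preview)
Your proof is correct and follows essentially the same approach as the paper's: the same case split at $\alpha=1/3$, the same sandwich $(1-3\alpha)E\subset K\subset(1+3\alpha)E$ via \eqref{contE} for small $\alpha$, and the same diameter-times-width rectangle bound for large $\alpha$, yielding the identical intermediate constants $4\sqrt{3}$ and $\sqrt{3}+2\alpha$. The only cosmetic difference is that the paper obtains the inner inclusion $(E)_{-\alpha\omega}\subset K$ from the Minkowski identity $((K)_{\alpha\omega})_{-\alpha\omega}=K$ for convex $K$, whereas you prove it directly by a separating-line argument; these are two faces of the same fact.
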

\begin{proof}
By scale invariance of $\alpha$, we may assume $K$ of width $\omega(K)=1$ so that $K\subset S$ for a suitable strip $S$ of width $\omega(S)=1$. Moreover, let us simply set $\alpha=\alpha(K)$. By Lemma~\ref{lem.min} there exists an equilateral triangle $E$ of width $\omega(E)=1$ such that $d_{\mathcal H}(K,E)=\alpha$, which implies $K \subset (E)_\alpha$ and $E \subset (K)_\alpha$.

We prove the desired estimate according to the size of $\alpha$, for which the $\alpha$-retraction $(E)_{-\alpha}$ is or not the empty set. 

\smallskip

\emph{($\alpha$ large) $\alpha\geq 1/3$.} 
By construction, the two inclusions for $K$ imply $K\subset  (E)_\alpha\cap S\subset R_\alpha$ for a suitable rectangle $R_\alpha$ of side lengths $\mathrm{diam}(E)_\alpha$ and $\omega(S)=1$. By the triangle inequality, for every $x,y\in (E)_\alpha$, we have
$$
\mathrm{dist}(x,y) \leq \mathrm{dist}(x,E) + \mathrm{diam}(E) + \mathrm{dist}(y,E) \leq {2\sqrt{3}}/{3}+2\alpha,
$$
and the equality is reached by the endpoints of the longest segment in $(E)_\alpha$ containing a side of $E$. By the arbitrariness of $x,y\in (E)_\alpha$, this implies  $\mathrm{diam}(E)_\alpha = {2\sqrt{3}}/{3}+2\alpha$. Then
$$
|K \setminus E| \leq |K| \leq |R_\alpha|={\frac{2\sqrt{3}}{3}+2\alpha}.
$$
Moreover, $|E\setminus K| \leq |E| = {1}/{\sqrt{3}}$, which combined with the previous inequality and the fact that $\alpha>1/3$ gives
$$
\mathcal A(K) \leq |K \Delta E| \leq \frac{1}{\sqrt{3}} + \frac{2\sqrt{3}}{3} + 2 \alpha = \alpha \left(\frac{
\sqrt{3}}{{3}\alpha} + \frac{2\sqrt{3}}{3\alpha} + 2\right) \leq \left(3\sqrt{3} +2 \right) \alpha(K). 
$$
This is an estimate for $\alpha$ large.

\smallskip

\emph{($\alpha$ small) $\alpha< 1/3$.} From the inclusion $E\subset (K)_\alpha$ we have $(E)_{-\alpha}\subset((K)_\alpha)_{-\alpha}= K$ and then $|E\setminus K|\leq |E\setminus (E)_{-\alpha}|=|E|-|(E)_{-\alpha}|$. By using \eqref{contE} to expand $(E)_{-\alpha}$ it follows that
$$|E\setminus K|\leq \left(1-\left(1-\frac{\alpha}{r(E)}\right)^2\right)|E|
=\left(2-\frac{\alpha}{r(E)}\right) \frac{|E|}{r(E)}\alpha.$$
From the inclusion $K\subset (E)_\alpha$ we deduce a similar inequality: using \eqref{contE}, we obtain
\[
|K\setminus E|\leq |(E)_\alpha| - |E| \leq \left(\left (1 + \frac{\alpha}{r(E)} \right)^2 - 1\right)|E| = \left(2+\frac{\alpha}{r(E)}\right) \frac{|E|}{r(E)}\alpha.
\]
Therefore, since $1/r(E)=3$ and $|E|=1/\sqrt{3}$, we obtain
\[
\mathcal A(K)\leq |K \Delta E|\leq 4  \frac{|E|}{r(E)}\alpha = 4 \sqrt{3} \alpha.
\]
This concludes the proof, as the constant obtained for $\alpha$ large is greater than that for $\alpha$ small.
\end{proof}

\section{Proof of the quantitative P\'al inequality via the inradius}\label{sec-fromator}
As a consequence of Lemma~\ref{lemma-contact} one can derive well-known inequalities relating the inradius and the width of a convex body (see, e.g., \cite[Exercise 6-2]{yagbol}).

\begin{theorem}\label{teo.rw}
Let $K$ be a planar convex body. Then
\begin{equation}\label{ratio-rw}
\frac{r(K)}{\omega(K)} \leq  \frac12
\end{equation}
and the equality holds if and only if $K$ has contact points on the same diameter (i.e., $K$ is a disk, a rectangle, \dots).

If $K$ is such that $r(K)< \omega(K)/2$, letting $T$ be the triangle circumscribed to $K$ of sides $s_1\geq s_2\geq s_3$, there holds
\begin{equation}\label{ratio-rw2better}
\frac{r(K)}{w(K)} - \frac13 \geq \frac{r(K) (\omega(T)-\omega(K)) }{\omega(K)\omega(T)}+ \frac{(s_1-s_3)}{3(s_1+s_2+s_3)}.
\end{equation}
In particular 
\begin{equation}\label{ratio-rw2}
\frac{r(K)}{\omega(K)} \geq \frac13,
\end{equation}
and the equality holds if and only if $K$ is an equilateral triangle.
\end{theorem}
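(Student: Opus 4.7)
The plan is to handle the three assertions in sequence, using the dichotomy of Lemma~\ref{lemma-contact}. For the upper bound \eqref{ratio-rw}, I would observe that every pair of parallel supporting lines of $K$ bounds a strip containing the indisk, so their distance is at least $2r(K)$; minimising over directions gives $\omega(K)\geq 2r(K)$. Equality forces two supporting lines at distance exactly $2r(K)$, which must touch the indisk at two diametrically opposite points --- exactly case (i) of Lemma~\ref{lemma-contact}.

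Turning to \eqref{ratio-rw2better}, when $r(K)<\omega(K)/2$ case (ii) applies and the circumscribed triangle $T=T_K$ is available, with $r(T)=r(K)$ and $\omega(T)=h_1\geq\omega(K)$ by \eqref{wrkt}. Combining $|T|=\tfrac12 s_i h_i$ with $r(T)=2|T|/(s_1+s_2+s_3)$ yields the key identity
\[
\frac{r(T)}{\omega(T)}=\frac{s_1}{s_1+s_2+s_3}.
\]
Since $s_1\geq s_2\geq s_3$, we have $2s_1-s_2-s_3\geq s_1-s_3$, so
\[
\frac{r(T)}{\omega(T)}-\frac{1}{3}=\frac{2s_1-s_2-s_3}{3(s_1+s_2+s_3)}\geq \frac{s_1-s_3}{3(s_1+s_2+s_3)}.
\]
I would then split
\[
\frac{r(K)}{\omega(K)}-\frac{1}{3}=\biggl(\frac{r(K)}{\omega(K)}-\frac{r(T)}{\omega(T)}\biggr)+\biggl(\frac{r(T)}{\omega(T)}-\frac{1}{3}\biggr),
\]
using $r(K)=r(T)$ to rewrite the first bracket as $r(K)(\omega(T)-\omega(K))/(\omega(K)\omega(T))$; combined with the lower bound just obtained, this is exactly \eqref{ratio-rw2better}. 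The lower bound \eqref{ratio-rw2} then follows immediately since both summands on the right are nonnegative.

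For the equality case in \eqref{ratio-rw2} --- which I expect to be the main obstacle --- assume $r(K)/\omega(K)=1/3$. Then automatically $r(K)<\omega(K)/2$, so \eqref{ratio-rw2better} applies and both summands must vanish: $\omega(K)=\omega(T)$ and $s_1=s_2=s_3$, so $T$ is equilateral with common height $h=\omega(K)$. The algebra stops here, but I still need to recover $K=T$. To do so I would argue directionally: for each vertex $V$ of $T$ and the corresponding altitude direction $\theta$, the projection of $K$ satisfies $h=\omega(K)\leq \mathrm{proj}_\theta(K)\leq \mathrm{proj}_\theta(T)=h$, so $K$ must reach the supporting line of $T$ through $V$; as $V$ is the only point of $T$ on that line, $V\in K$. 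Being convex, contained in $T$, and containing all three of its vertices, $K$ must coincide with $T$.
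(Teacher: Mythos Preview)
Your proposal is correct and follows essentially the same route as the paper: the same splitting of $r/\omega-1/3$ into the $(r/\omega-r/h_1)$ and $(r/h_1-1/3)$ pieces, the same identity $r/h_1=s_1/(s_1+s_2+s_3)$ coming from $2|T|=s_1h_1=(s_1+s_2+s_3)r$, and the same discard of $s_1-s_2\geq 0$ to reach \eqref{ratio-rw2better}. Your treatment of the equality case is in fact more complete than the paper's, which simply asserts that $h_1=\omega$ together with $s_1=s_2=s_3$ ``impl[ies] that $K$ is an equilateral triangle and $K=T$''; your projection argument showing each vertex of $T$ lies in $K$ fills that gap cleanly.
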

\begin{proof}
To fix the notation we will need in the following, let us recall the simple but clever idea uniderlying the proof. For a planar convex set $K$ let $D$ be an indisk of $K$. The inclusion $D\subset K$ gives $2r(D)=w(D)\leq w(K)$, that is \eqref{ratio-rw}. It is clear that the equality  $\omega(K)=2r(K)$ holds if and only if $K$ has two contact points on a diameter of $D$.

Now if $K$ is such that $r(K)< \omega(K)/2$ then, by Lemma \ref{lemma-contact}, there exists a circumscribed triangle $T$. Let $r=r(K)$, $\omega=\omega(K)$, $h_1=\omega(T)$. By \eqref{wrkt} $r=r(T)$ and $\omega\leq h_1$. 
In order to give a lower bound for $r/\omega - 1/3$, we split the difference as follows:
\begin{equation}\label{split}
\frac{r}{\omega} - \frac13 = \left(  \frac{r}{\omega} - \frac{r}{h_1}   \right) + \left( \frac{r}{h_1 } - \frac13 \right).
\end{equation}
Let us write $r/h_1$ in a more convenient way. The area of the triangle $T$ satisfies
\begin{equation}\label{area1}
2|T| = {s_1}h_1= (s_1+s_2+s_3)r.
\end{equation}
The former equality is obtained by considering the base $s_1$. The latter equality is obtained by joining the three vertexes of $T$ with the center of the indisk $D$, namely by dividing $T$ into 3 triangles having base $s_i$ and height $r$. Formula \eqref{area1} allows us to deduce that
\begin{equation*}%\label{clever}
\frac{r}{h_1} = \frac{s_1}{s_1+s_2+s_3},
\end{equation*}
in particular
$$
\frac{r}{h_1}-\frac13 = \frac{(s_1 - s_2) + (s_1 - s_3)}{3(s_1+s_2+s_3)}.
$$

Going back to \eqref{split}, we obtain
$$
\frac{r}{\omega}-\frac13 = \frac{r (h_1-\omega)}{\omega h_1}+ \frac{(s_1-s_2) + (s_1-s_3)}{3(s_1+s_2+s_3)}\geq  \frac{r (h_1-\omega) }{\omega h_1}+ \frac{ (s_1-s_3)}{3(s_1+s_2+s_3)} \geq 0.
$$
The two inequalities simply follow from $h_1 \geq \omega$ and $s_1-s_3\geq 0$. If equality holds in \eqref{ratio-rw2} then  $s_1=s_2=s_3$, i.e., the circumscribed triangle is equilateral; moreover, $K$ and $T$ must satisfy $h_1=\omega$, implying that $K$ is an equilateral triangle and $K=T$. The fact that equilateral triangles satisfy \eqref{ratio-rw2} is trivial.
\end{proof}

In view of Theorem~\ref{teo.rw} the ratio $r/\omega$ varies in the interval $[1/3, 1/2]$. For this reason we consider the following function just defined on that interval.
\begin{lemma}\label{properties}
The function $\varphi\colon [1/3,1/2]\to \mathbb [0,+\infty)$ defined by 
\begin{equation}\label{phi}
\varphi(x):=3x^2\left(\frac{\pi}{3}-\arccos\left(\frac{x}{1-x}\right)\right) + 3x \sqrt{1-2x} 
\end{equation}
is strictly increasing and thus, for all $x \in [1/3,1/2]$, 
$$
\varphi(x) \geq \varphi(1/3)= \frac{1}{\sqrt{3}}.
$$
Moreover, for all $x\in [1/3,1/2]$, 
\begin{equation}\label{Lip}
\varphi(x)-\varphi(1/3) \geq \frac{1}{\sqrt{5}}(x-1/3).
\end{equation}
\end{lemma}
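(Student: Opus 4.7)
The plan is to differentiate $\varphi$, observe that $\varphi'$ decomposes into two non-negative summands on $[1/3,1/2]$, and show that $\varphi'(x)\ge 1/\sqrt{5}$ everywhere on the interval. The two assertions will then follow: the first from strict positivity of $\varphi'$ together with the direct computation $\varphi(1/3)=1/\sqrt{3}$ (at $x=1/3$, $x/(1-x)=1/2$, so $\arccos(x/(1-x))=\pi/3$, the first summand in $\varphi(1/3)$ vanishes and the second equals $1/\sqrt{3}$), and the second by the mean value theorem.

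A routine differentiation, using the identity $1-(x/(1-x))^2=(1-2x)/(1-x)^2$, yields after simplification
$$
\varphi'(x)=2x\left[\pi-3\arccos\!\left(\tfrac{x}{1-x}\right)\right]+\frac{3(1-2x)^{3/2}}{1-x}.
$$
On $[1/3,1/2]$ the argument $x/(1-x)$ runs monotonically from $1/2$ to $1$, so $\arccos(x/(1-x))\in[0,\pi/3]$; both summands are non-negative, vanishing only at $x=1/3$ and $x=1/2$ respectively. Hence $\varphi'>0$ on $(1/3,1/2)$, giving strict monotonicity.

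For \eqref{Lip}, the key observation is that $x_0=2/5$ is exactly the point at which the second summand of $\varphi'$ equals $1/\sqrt{5}$: indeed $3(1/5)^{3/2}/(3/5)=5\cdot5^{-3/2}=1/\sqrt{5}$. I would split the interval at $x_0$. On $[1/3,2/5]$ the map $g(x):=3(1-2x)^{3/2}/(1-x)$ is decreasing (one checks $g'(x)=-3(2-x)(1-2x)^{1/2}/(1-x)^2\le 0$), so $\varphi'(x)\ge g(x)\ge g(2/5)=1/\sqrt{5}$. On $[2/5,1/2]$ the first summand $f(x):=2x[\pi-3\arccos(x/(1-x))]$ is a product of two non-negative increasing factors and is therefore increasing, so $\varphi'(x)\ge f(x)\ge f(2/5)$, and it remains to verify $f(2/5)\ge 1/\sqrt{5}$, i.e., $\arccos(2/3)\le \pi/3-\sqrt{5}/12$. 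Integrating $\varphi'\ge 1/\sqrt{5}$ over $[1/3,x]$ then gives \eqref{Lip}.

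The main obstacle is the explicit numerical inequality $\arccos(2/3)\le \pi/3-\sqrt{5}/12$. The plan is to dispatch it via monotonicity of $\cos$ on $[0,\pi/2]$: it suffices to check that $\cos(\pi/3-\sqrt{5}/12)\le 2/3$. Using the difference-of-angles formula together with the elementary bounds $\cos\theta\le 1$ and $\sin\theta\le \theta$ on $[0,\pi/2]$, one obtains
$$
\cos\!\left(\tfrac{\pi}{3}-\tfrac{\sqrt{5}}{12}\right)\le \tfrac12+\tfrac{\sqrt{3}}{2}\cdot\tfrac{\sqrt{5}}{12}=\tfrac12+\tfrac{\sqrt{15}}{24}<\tfrac12+\tfrac16=\tfrac23,
$$
the last inequality being equivalent to $\sqrt{15}<4$. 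This closes the argument.
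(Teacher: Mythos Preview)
Your proof is correct and follows essentially the same route as the paper: compute $\varphi'$, split it as the sum of an increasing non-negative piece $f(x)=6x[\pi/3-\arccos(x/(1-x))]$ and a decreasing non-negative piece $g(x)=3(1-2x)^{3/2}/(1-x)$, and cut the interval at $x_0=2/5$ so that $\varphi'\ge\min\{f(2/5),g(2/5)\}=g(2/5)=1/\sqrt{5}$. The paper simply asserts the comparison $f(2/5)\ge g(2/5)$ as a numerical check, whereas you supply a clean elementary justification via $\cos(\pi/3-\sqrt5/12)\le 1/2+\sqrt{15}/24<2/3$; that is the only notable difference.
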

\begin{proof}
The function $\varphi$ is clearly continuous in $[1/3,1/2]$ and differentiable in $(1/3,1/2)$. A direct computation of the derivative gives
\[
\varphi'(x) 
=  6 x \left[ \frac{\pi}{3}  - \arccos\left(\frac{x}{1-x}\right)\right] + \frac{3 (1-2x)^{3/2}}{1-x}:=f(x)+g(x).
\]
We have written $\varphi'$ as the sum of two positive functions, that we call $f$ and $g$. Indeed, in $(1/3,1/2)$, the function $x/(1-x)$ is strictly increasing and thus $\arccos(x/(1-x))$ is strictly decreasing with values ranging from $\pi/3$ to $0$, which implies the positivity of $f$. The positivity of $g$ is obvious. Since $\varphi'>0$ in $(1/3,1/2)$ then $\varphi$ is striclty increasing in $[1/3,1/2]$ with minimum in $x=1/3$, see Figure \ref{fig4}.

Now, by the mean value theorem, for every $x\in (1/3,1/2)$ there exists $\xi_x\in (1/3,x)$ such that
$$
\varphi(x)-\varphi(1/3) = (x-1/3) \varphi'(\xi_x) \geq (x-1/3) \min_{\xi \in [1/3,1/2]}\varphi'(\xi).
$$
The minimum of $\varphi'$ can be obtained by imposing $\varphi''(x)=0$, but this leads to a trigonometric equation that cannot be explicitly solved. Therefore, we just look for an estimate of this minimum.
As already noticed, $f,g\geq0$. The function $f$ is increasing because it is the product of two increasing functions. As for the function $g$, it is decreasing, as it can be seen that
$$
g'(x) =\frac{3(x-2)\sqrt{1-2x}}{(1-x)^2}< 0 \quad \hbox{in }(1/3,1/2),
$$
see Figure \ref{fig5}.
Let $x_0$ be a value (that we will choose in a moment) in the interval $(1/3,1/2)$. For all $x\in [1/3,x_0]$ we have $\varphi'(x)=f(x)+g(x) \geq g(x) \geq g(x_0)$, and for all $x\in [x_0,1/2]$
we have $\varphi'(x)=f(x)+g(x) \geq f(x) \geq f(x_0)$. Therefore, for all $x\in[1/3,1/2]$ we obtain $\varphi'(x) \geq \min\{f(x_0), g(x_0)\}$ and by choosing $x_0=2/5$
$$
\varphi'(x) \geq \min\{g(3/8), f(3/8)\}=\min\{1/\sqrt5, 12(\pi/3-\arccos(2/3))/5\}= {1}/{{\sqrt5}}.
$$
This gives the desired estimate.
\end{proof}
\begin{figure}
\centering
\begin{minipage}{0.5\textwidth}
\begin{tikzpicture}[scale=0.7]
\begin{axis}[
    axis lines = middle,
    xlabel = $x$,
    xlabel style={xshift=10ex}, 
    ymin=0.3, ymax=1.5,
    xmin=0.3, xmax=0.5,
    samples=100,
    domain=0.3:0.5
]
\addplot [
    domain=1/3:0.5, 
    samples=100, 
    color=black,
    thick,
]
{pi*x^2-3*x^2*rad(acos(x/(1-x)))+3*x*sqrt(1-2*x)};
\addplot [
    domain=1/3:0.5, 
    samples=100, 
    color=black,
   ]
{6*x*(pi/3-rad(acos(x/(1-x))))+(3*(1-2*x)^(3/2))/(1-x)};
\node[above right] at (axis cs: 0.44,0.57) {$\varphi(x)$};
\node[above right] at (axis cs: 0.44,1.15) {$\varphi'(x)$};
\end{axis}
\end{tikzpicture}
\caption{The function $\varphi$ and its derivative $\varphi'$.}\label{fig4}
 \end{minipage}\hfill
 \begin{minipage}{0.5\textwidth}
        \centering
\quad
\begin{tikzpicture}[scale=0.7]
\begin{axis}[
    axis lines = middle,
    xlabel = $x$,
    xlabel style={xshift=10ex}, 
    ymin=0.3, ymax=1.5,
    xmin=0.3, xmax=0.5,
    samples=100,
    domain=0.3:0.5
]
\addplot [
    domain=1/3:0.5, 
    samples=100, 
    color=black,
    thick,
]
{6*x*(pi/3-rad(acos(x/(1-x))))};
\addplot [
    domain=1/3:0.5, 
    samples=100, 
    color=black,
   ]
{(3*(1-2*x)^(3/2))/(1-x)};
\node[above right] at (axis cs: 0.32,0.87) {$g(x)$};
\node[above right] at (axis cs: 0.43,1.35) {$f(x)$};
\end{axis}
\end{tikzpicture}
\caption{The functions $f$ and $g$.}\label{fig5}
\end{minipage}
\end{figure}

The next  result is crucial and relates the area to the inradius, allowing for a partial reduction of the quantitative inequality for the area to a quantitative one for the inradius, always under width constraint. We discuss this issue in a moment.

\begin{proposition}\label{lowerphi}
For every planar convex body $K$ we have
$$
\frac{|K|}{\omega^2(K)} \geq \varphi\left( \frac{r(K)}{\omega(K)} \right),
$$
where the function $\varphi$ is defined in \eqref{phi}.
\end{proposition}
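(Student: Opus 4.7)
The plan is to normalize by scale invariance so that $\omega(K) = 1$, reducing the inequality to $|K| \geq \varphi(r)$ with $r := r(K) \in [1/3, 1/2]$. The boundary case $r = 1/2$ is immediate: the indisk $D$ of $K$ satisfies $|D| = \pi r^2 = \pi/4 = \varphi(1/2)$ and $D \subseteq K$.

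For $r \in [1/3, 1/2)$, I apply Lemma~\ref{lemma-contact}(ii) to obtain three contact points $P_1, P_2, P_3 \in \partial K \cap \partial D$ forming an acute triangle. The tangent lines $\ell_i$ to $D$ at the $P_i$'s are supporting lines of $K$ and form the circumscribed triangle $T_K$ of Section~\ref{sec-prel}, with $r(T_K) = r$ and $\omega(T_K) \geq \omega(K) = 1$. For each $i$, the width condition forces a supporting line $L_i$ of $K$ parallel to $\ell_i$ on the far side, at perpendicular distance $\omega_i - r \geq 1 - r =: d$ from the center $O$ of $D$, where $\omega_i \geq 1$ is the width of $K$ in the direction orthogonal to $\ell_i$. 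Pick a touching point $Q_i \in K \cap L_i$, so $|OQ_i| \geq d$, and set $H_K := \operatorname{conv}(D \cup \{Q_1, Q_2, Q_3\}) \subseteq K$, giving $|K| \geq |H_K|$.

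The area of $H_K$ decomposes as three kites $Q_i T_i^+ O T_i^-$ (each of area $r\sqrt{d_i^2 - r^2}$, where $d_i := |OQ_i|$ and $T_i^\pm$ are the tangent points from $Q_i$ to $\partial D$) plus three circular sectors of $D$ bounded by the arcs connecting consecutive tangent points of adjacent vertices, yielding
\[
|H_K| = \pi r^2 + \sum_{i=1}^3 f(d_i), \qquad f(t) := r\sqrt{t^2 - r^2} - r^2 \arccos(r/t).
\]
A direct computation gives $f'(t) = r\sqrt{t^2-r^2}/t > 0$ for $t > r$, so $f$ is increasing on $[r, +\infty)$, hence $|H_K| \geq \pi r^2 + 3f(d) = \varphi(r)$, the identification with $\varphi$ using $\sqrt{1-2r} = (1-r)\sin\alpha$ with $\alpha := \arccos(r/(1-r))$.

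The main obstacle is to justify the decomposition: the three tangent cones from the $Q_i$'s must not overlap, equivalently each arc of $\partial D$ between two consecutive contact points $P_j, P_k$ must have measure at least $2\alpha$. Ordering these arcs $a_1 \leq a_2 \leq a_3$ (so $a_i = \pi - \gamma_i$ for $\gamma_i$ the angle of $T_K$ opposite $P_i$), the smallest altitude of $T_K$ admits the formula $h_1 = 2r\sin(a_2/2)\sin(a_3/2)/\cos(a_1/2)$, and the bound $h_1 = \omega(T_K) \geq 1$ combined with the product-to-sum identity $\sin(a_2/2)\sin(a_3/2) = \tfrac12(\cos((a_2-a_3)/2) + \cos(a_1/2))$ yields $(1-r)\cos(a_1/2) \leq r$, i.e., $a_1 \geq 2\alpha$. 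Since $a_1$ is the smallest arc, all three meet the required bound; then choosing each $Q_i$ on the ray from $O$ opposite to $P_i$ (for instance as the foot of the perpendicular from $O$ onto $L_i$, whenever this foot lies in $K$) makes the angular separations of the $Q_i$'s match those of the $P_i$'s and rules out overlap.
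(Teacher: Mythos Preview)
Your approach mirrors the paper's: both inscribe in $K$ the convex hull of the indisk $D$ and three far points, and bound the area of this hull from below by $\varphi(r)$. The paper silently asserts that this hull equals $D$ together with three \emph{disjoint} ``curved triangles''; you, commendably, recognise that this disjointness (non-overlap of the three tangent cones) requires proof, and you supply the key inequality $a_1\geq 2\alpha$ via the neat altitude formula $h_1=2r\sin(a_2/2)\sin(a_3/2)/\cos(a_1/2)$.

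However, your final sentence does not close the gap. You propose to ``choose each $Q_i$ on the ray from $O$ opposite to $P_i$'' (e.g.\ the foot of the perpendicular from $O$ onto $L_i$), but there is no guarantee that this foot lies in $K$---your own parenthetical ``whenever'' concedes this. Worse, even if such a $Q_i$ existed, the tangent cone from a point at distance $d_i\geq 1-r$ subtends an arc of measure $2\arccos(r/d_i)\geq 2\alpha$, so the separation $a_k\geq 2\alpha$ between antipodes is \emph{not} enough to prevent overlap when the $d_i$'s are large. And you cannot re-choose the $Q_i$'s after having written $|H_K|=\pi r^2+\sum f(d_i)$, since that formula already presupposes non-overlap for the \emph{given} points.

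The clean fix combines your inequality with the paper's projection. Replace each $Q_i$ by the point $Q_i'$ on the segment $OQ_i$ at distance exactly $1-r$; then $Q_i'\in K$ by convexity, and each cone now has half-angle exactly $\alpha$. Since $Q_i$ lies in $T_K$ at perpendicular distance $\omega_i-r>r$ from $\ell_i$, it sits in the corner of $T_K\setminus D$ near $V_i$; hence the direction of $OQ_i'=OQ_i$ falls in the arc $P_jP_k$. Moreover, because $Q_i'\in T_K$ at distance $1-r$, its direction is at angular distance at least $\alpha$ from each endpoint $P_j,P_k$ (the extremal case being $Q_i'\in\ell_j$, where one tangent from $Q_i'$ to $D$ touches precisely at $P_j$). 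Thus the arc of measure $2\alpha$ covered by the cone from $Q_i'$ lies entirely inside the arc $P_jP_k$, and as the three arcs $P_jP_k$ are disjoint, so are the three cones. Your bound $a_i\geq 2\alpha$ is exactly what ensures the interval of admissible directions $[\angle P_j+\alpha,\angle P_k-\alpha]$ is nonempty.
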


\begin{figure}
\centering
    % Quadrato grigio con il 50% di saturazione

\begin{tikzpicture}[scale=0.7]
\coordinate (O) at (0,0);  
\def\radius{3}  
\def\a{35}
\draw[thick] (O) circle (\radius);
\coordinate (Q1') at ({\radius/cos(\a)}, 0);  
\coordinate (Q1) at ({\radius/cos(\a)+1.2}, 0);  
%\coordinate (A) at (2, {sqrt(\radius^2 - 2^2)});  
%\coordinate (B) at (2, -{sqrt(\radius^2 - 2^2)});  
\coordinate (A) at ({\radius*cos(\a)}, {\radius*sin(\a)});  
\coordinate (B) at ({\radius*cos(\a)}, {-\radius*sin(\a)});  
 \fill[gray!30] (A) -- (B) -- (Q1') -- cycle;
    \fill[white] (0,0) circle(3); 
    
\fill (O) circle (2pt) node[below left] {$O$};
\fill (Q1') circle (2pt) node[below right] {$Q_1'$};
\fill (Q1) circle (2pt) node[below right] {$Q_1$};
\fill  (A) circle (2pt) node[above right] {$A$};
\fill (B) circle (2pt) node[below right] {$B$};

\draw pic[draw=black, angle eccentricity=1.2, angle radius=5mm]  {angle=Q1--O--A};
\node[above right] at (0.7,0.04) {$\theta$};

\node[above left] at (-2.2,2.2) {$D$};
\node[below left] at (2.8,0) {$\omega-r$};
\node[above left] at (1,1) {$r$};
\node[above right] at (2.8,0.8) {$\sqrt{(\omega-r)^2-r^2}$};
\node at (3.2,0.3) {$L$};

\draw[thick] (Q1) -- (O);
\draw[thick] (A) -- (O);
\draw[thick] (Q1') -- (A); 
\draw[thick] (Q1') -- (B); 

  \draw[thin] (A) -- ++(0.14,-0.2) -- ++(-0.2,-0.14) -- ++(-0.14,0.2) -- cycle;

\end{tikzpicture}
\caption{The construction of the ``curved triangle'' $L$ (in grey) used in the proof of Proposition~\ref{lowerphi}.}\label{fig6}
\end{figure}

\begin{proof}
By Lemma \ref{lemma-contact} two situations may occur.
In case (i) we have $\omega(K)=2r(K)$ and by taking an indisk $D\subset K$, we have $|K| \geq |D|=\pi r^2(D)=\pi r^2(K)$. Then
$$
\frac{|K|}{\omega^2(K)} \geq \frac{\pi r^2(K)}{4 r^2(K)} = \frac{\pi}{4} = \varphi\left( \frac12\right) = \varphi\left(\frac{r(K)}{\omega(K)}\right).
$$
Let us now focus on the case (ii). Let us use the notation introduced in Section~\ref{sec-prel}, see also Figure \ref{fig.KDT}.  In particular, we use $r$ for $r(K)$ and $\omega$ for $\omega(K)$.
Consider the convex hull $C:=\mathrm{conv}(D; Q_1;Q_2; Q_3)$. By convexity, we infer that $C$ is contained into $K$, in particular $|K| \geq |C|$. We now construct a smaller set $C^\prime\subset C$, whose area can be bounded from below in an easy way. 

Let $O$ denote the center of the indisk $D$. We call $\tilde{\ell}_i$ the supporting line parallel to $\ell_i$ at distance $\omega_i$ from $\ell_i$ (these correspond to the dashed lines in Figure \ref{fig.KDT}). In particular $Q_i \in \tilde{\ell}_i$. Then, for every $1\leq i \leq 3$, there holds
$$
\mathrm{dist}(Q_i,O) \geq \mathrm{dist}(\tilde{\ell}_i,O) = \omega_i - \mathrm{dist}(\ell_i,O) = \omega_i - r \geq \omega-r. 
$$
Here we have used the fact that $\ell_i$ is parallel to $\tilde{\ell}_i$ and tangent to $D$. Let ${Q}^\prime_i$ denote the point of the segment $Q_iO$, possibly coinciding with $Q_i$, which satisfies $\mathrm{dist}({Q}^\prime_i,O)=\omega-r$, see Figure~\ref{fig6}. By convexity, since $Q_i$ and $O$ belong to $K$, we infer that also ${Q}_i^\prime$ belongs to $K$. In particular ${C}^\prime:=\mathrm{conv}(D;{Q}^\prime_1;{Q}^\prime_2;{Q}^\prime_3)\subset K$ and $|K|\geq |{C}^\prime|$. The advantage of this operation is that the area of ${C}^\prime$ only depends on $\omega$ and $r$, whereas the area of $C$ depends on $\omega_i$ and the position of $Q_i$ on $\tilde{\ell}_i$. Let us compute $|{C}^\prime|$: the set ${C}^\prime$ is the union of $D$ and three copies (rotations) of the same set $L$. The set $L$ is a "curved" triangle: its boundary is made of a boundary arc of $\partial D$ and two segments starting from ${Q}^\prime_1$ and touching $\partial D$ tangentially. Let $A$ and $B$ denote the endpoints of the two segments passing through ${Q}^\prime_1$ tangent to $D$. Then the triangle $OA{Q}^\prime_1$ (and its homothetic copy ${Q}^\prime_1BO$) is rectangle. Therefore the segment $A{Q}^\prime_1$ has length $\sqrt{(\omega-r)^2-r^2}$ and the angle in $O$ is
$$
\theta:=\widehat{AO{Q}^\prime_1} = \arccos(r/(\omega-r)).
$$
The area of $L$ is nothing but the difference of twice the area of the triangle $AO{Q}^\prime_1$ and twice the area of the circular sector of angle $\theta$ in $O$:
$$
|L| = r \sqrt{(\omega-r)^2-r^2} -  \theta r^2 .
$$
Using this expression, we get
$$
|{C}^\prime|= |D| + 3|L| = \pi r^2 + 3  r \sqrt{(\omega-r)^2-r^2} -  3 \theta r^2 .
$$
Dividing by $\omega^2$, we obtain the desired inequality:
$$
\frac{|K|}{\omega^2} \geq \frac{|{C}^\prime|}{\omega^2} \geq \pi \left(\frac{r}{\omega}\right)^2 + 3  \frac{r}{\omega} \sqrt{1-2 \frac{r}{\omega}} -  3 \theta \left( \frac{r}{\omega}\right)^2 .
$$
This proves the lemma.
\end{proof}

We are now in a position to prove the first quantitative version.

\begin{proof}[Proof of Theorem~\ref{teo1}]
The estimate follows simply by combining the results in Proposition~\ref{lowerphi} and Lemma \ref{properties}, in particular by using \eqref{Lip}. This also gives the constant $c_1=1/\sqrt{5}$.
\end{proof}

\section{Intermezzo: a quantitative inequality for the inradius} \label{sec-r}

A key step in the proof of the P\'al inequality is an estimate involving the inradius under width constraint. 
In the proof of Theorem~\ref{teo2}, we rely on a quantitative version of this estimate. Given its novelty and standalone significance, we dedicate an entire section to this estimate. 
Under width constraint, there is no hope to bound the quantity $\eta$ in terms of the asymmetries $\alpha$ or $\mathcal A$.
This can be seen, for instance, by considering a sequence of rectangles of longest sides diverging towards infinity: along the sequence $\eta$ is constant, but the asymmetries $\alpha$ and $\mathcal A$ diverge to infinity. 
This is due to the fact that the inradius is a too rough quantity to describe the shape: this is evident by considering a disk $D$ and a rectangle $R$, having the same deficit but very different asymmetries from an equilateral triangle.

Consequently, we are compelled to adopt a weaker notion of asymmetry. We truncate the asymmetry $\alpha$ at the value corresponding to the asymmetry of the disk $\alpha(D)=1/6$ (see Lemma~\ref{unsesto}), that is, we consider
\begin{equation}\label{beta}
\beta_E(K):=\min\{\alpha(K),\alpha(D)\}=\min\{\alpha(K),1/6\}.
\end{equation}

\begin{theorem}\label{r-omega-quantitative}
There exists a constant $c_4>0$ such that, for every planar convex body $K$ there holds
$$
\frac{r(K)}{\omega(K)}-\frac{1}{3} \geq c_4 \beta_E(K),
$$
with $\beta_E$ as in \eqref{beta}.
\end{theorem}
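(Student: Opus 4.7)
The strategy is to combine the refined lower bound on $\eta(K)=r(K)/\omega(K)-1/3$ from Theorem~\ref{teo.rw} with the Hausdorff upper bound for $\alpha(K)$ from Proposition~\ref{asy.convex}, through a case analysis based on the size of $\eta(K)$.

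If $r(K)=\omega(K)/2$, then $\eta(K)=1/6\geq \beta_E(K)$ trivially (recall $\beta_E\leq 1/6$ by definition \eqref{beta}), and there is nothing to prove. We may therefore assume $r(K)<\omega(K)/2$, so that the circumscribed triangle $T_K$ from Section~\ref{sec-prel} exists, with sides $s_1\geq s_2\geq s_3$ and smallest angle $\gamma_T=\gamma_3$. Fix a small threshold $\eta_0>0$ to be chosen later. If $\eta(K)\geq \eta_0$, then $\eta(K)\geq 6\eta_0\cdot\beta_E(K)$, which concludes; so we may further assume $\eta(K)<\eta_0$, and aim to show $\alpha(K)\leq C\eta(K)$ for a universal constant $C$, whence $\beta_E(K)\leq \alpha(K)\leq C\eta(K)$.

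From Theorem~\ref{teo.rw} and $r(K)/\omega(K)\geq 1/3$, both non-negative summands on the right of \eqref{ratio-rw2better} are at most $\eta(K)$, which yields
$$
\frac{\omega(T_K)-\omega(K)}{\omega(T_K)}\leq 3\eta(K),\qquad \frac{s_1-s_3}{s_1+s_2+s_3}\leq 3\eta(K).
$$
The second bound, combined with $s_2,s_3\leq s_1$, gives $s_3\geq (1-9\eta)s_1$, so for $\eta_0$ small enough the sides of $T_K$ are uniformly comparable. The law of cosines $\cos\gamma_3=(s_1^2+s_2^2-s_3^2)/(2s_1s_2)$ then implies both a uniform lower bound $\gamma_T\geq \gamma_*>0$ and the quantitative estimate $|\cos\gamma_3-1/2|\leq C_1\eta(K)$, hence $|\pi/3-\gamma_T|\leq C_2\eta(K)$ and $\sqrt{3}-\tan\gamma_T\leq C_3\eta(K)$. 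Moreover, the identity $s_1/\omega(T_K)=\cot\gamma_2+\cot\gamma_3$ together with $\gamma_T\geq \gamma_*$ yields $s_1/\omega(K)\leq C_4$.

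Inserting these estimates into Proposition~\ref{asy.convex} and decomposing the third summand as
$$
\frac{\sqrt{3}\,\omega(T_K)}{\tan\gamma_T\,\omega(K)}-1=\frac{\sqrt{3}-\tan\gamma_T}{\tan\gamma_T}\cdot\frac{\omega(T_K)}{\omega(K)}+\frac{\omega(T_K)-\omega(K)}{\omega(K)},
$$
each of the three terms bounding $\alpha(K)$ is controlled by a constant multiple of $\eta(K)$, which gives $\alpha(K)\leq C\eta(K)$ and concludes the argument with $c_4=\min(1,\,6\eta_0,\,1/C)$. The main obstacle is the control of the third summand, for which one must simultaneously bound $\gamma_T$ away from $0$ (to tame $1/\tan\gamma_T$) and $\pi/3-\gamma_T$ linearly in $\eta(K)$ (to tame $\sqrt{3}-\tan\gamma_T$); both rely on a careful quantitative use of the law of cosines in the near-equilateral regime $s_3\geq (1-9\eta)s_1$ forced by $\eta(K)<\eta_0$.
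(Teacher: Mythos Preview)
Your proof is correct and follows essentially the same approach as the paper: a threshold case-split on the size of $\eta(K)$, with the large-$\eta$ case handled trivially via $\beta_E\leq 1/6$, and the small-$\eta$ case handled by extracting from \eqref{ratio-rw2better} linear control of both $(\omega(T)-\omega(K))/\omega(T)$ and $(s_1-s_3)/(s_1+s_2+s_3)$, then feeding these into Proposition~\ref{asy.convex} to bound $\alpha(K)$ linearly in $\eta(K)$. The only notable difference is in the treatment of the angle-dependent third term of Proposition~\ref{asy.convex}: the paper rewrites it via $4\cos^2\gamma_3-1$ and the cosine formula, whereas you control $\sqrt{3}-\tan\gamma_T$ directly through $|\cos\gamma_3-1/2|$; your route is slightly more streamlined, while the paper's route keeps all constants explicit so as to produce a concrete value of $c_4$.
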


\begin{proof}
Let $K$ be a planar convex body and $D$ be an indisk of $K$. For brevity denote by $r=r(K)=r(D)$ its inradius, by $\omega=\omega(K)$ its width and by $\eta:=r/\omega - 1/3$ its deficit. From Theorem~\ref{ratio-rw} the deficit $\eta\in[0,1/6]$. We prove the desired estimate according to the size of $\eta$. Fix some $0 < \delta \leq 1/2$.

\smallskip

\emph{(Large inradius).} If $\eta\geq \delta/3$ then the quantitative estimate easily follows with the constant $2\delta$. 
Indeed, by Lemma \ref{unsesto} there holds
$$
\eta(K) \geq \frac{\delta}{3} =  2\delta \cdot \alpha(D) \geq 2\delta \cdot \min\{\alpha(K), \alpha(D)\} = 2\delta \cdot \beta(K).
$$

\smallskip

\emph{(Small inradius).} For the rest of the proof, we can thus assume $\eta< \delta/3$, that is 
\begin{equation}\label{hp1}
r<\left(\frac{1+\delta}{3}\right)\omega.
\end{equation}  
By \eqref{ratio-rw} there exists the circumscribed triangle $T$ to $D$ and $K$. 
Since the sides $s_1,s_2,s_3$ of the triangle $T$ satisfy $s_1 h_1 = 2|T| = r (s_1+s_2+s_3)$, we deduce that 
\begin{equation}\label{clever2}
h_1 = r (s_1+s_2+s_3)/s_1.
\end{equation}
By combining \eqref{clever2} with the assumption \eqref{hp1} and $s_1\geq s_2\geq s_3$, we obtain
\begin{equation}\label{stimah}
h_1\leq (1+\delta)\omega.
\end{equation}
On the other hand, using again \eqref{clever2} and the inequality \eqref{hp1} combined with $\omega\leq h_1 $, we infer that $3s_1\leq (1+\delta)(s_1+s_2+s_3)$. This last inequality, combined either with $s_3\leq s_2$ or with $s_2\leq s_1$, allows to estimate the longest side $s_1$ in terms of the shorter ones $s_2$ and $s_3$:
\begin{equation}\label{stimas}
s_1\leq \frac{1+\delta}{1-\delta/2}s_2\quad\text{and}\quad s_1\leq \frac{1+\delta}{1-2\delta}s_3. 
\end{equation}
The inequalities \eqref{stimah} and \eqref{stimas} mean that the smaller $\delta$, the closer $T$ to an equilateral triangle. 
Now, by using the sine formula for $T$ and recalling that the largest angle $\gamma_1$ of $T$ satisfies $\gamma_1\geq \pi/3$, we deduce that
\begin{equation}\label{sin}
h_1=s_2\sin\gamma_3 =  \frac{s_2s_3}{s_1}\sin\gamma_1\geq \frac{s_2s_3}{s_1}\frac{\sqrt{3}}{{2}}.
\end{equation}
By \eqref{stimah} and \eqref{sin}, together with \eqref{stimas}, we thus have
\begin{equation}\label{fund}
s_2\leq \frac{2}{\sqrt{3}} \frac{(1+\delta)^2}{1-2\delta}\omega \quad \hbox{and} \quad 
s_3\leq \frac{2}{\sqrt{3}} \frac{(1+\delta)^2}{1-\delta/2}\omega.
\end{equation}
By using the first inequality \eqref{stimas} and \eqref{fund}, we estimate the perimeter of $T$ in terms of the width of $K$:
\begin{equation}\label{111}
s_1+s_2+s_3\leq \frac{1+\delta}{1-\delta/2}s_2 + s_2 + s_3 \leq 2\sqrt{3} \frac{(1+\delta)^2}{(1-2\delta)}\omega.
\end{equation}

Now, in view of Proposition~\ref{asy.convex}, recalling that $\gamma_T=\gamma_3$ and writing $h_1$ in terms of $s_2$ as $h_1=s_2\cos\gamma_3\tan\gamma_3$, 
we have 
\begin{equation}\label{stimaa}
\alpha(K)\leq \frac{(s_1-s_3)}{\omega} + \frac{s_3}{\omega}\frac{(h_1-\omega)}{h_1}+\frac{\sqrt{3}s_2\cos \gamma_3-\omega}{\omega}.
\end{equation}
We estimate each term in the sum above as follows. We use \eqref{111} to estimate the first term
\begin{equation}\label{la1}
\frac{(s_1-s_3)}{\omega} \leq \frac{6\sqrt{3}(1+\delta)^2}{(1-2\delta)}\frac{(s_1-s_3)}{3(s_1+s_2+s_3)}.% = : b_1(\delta) \frac{(s_1-s_3)}{3(s_1+s_2+s_3)}.
\end{equation}
For the second one we use the second inequality in \eqref{fund} and then $1 \leq 3r/\omega$ (see \eqref{ratio-rw2}), to obtain
\begin{equation}\label{la2}
 \frac{s_3}{\omega}\frac{(h_1-\omega)}{h_1}\leq 
  \frac{2\sqrt{3}{(1+\delta)^2}}{1-\delta/2}\frac{r(h_1-\omega)}{\omega h_1}.
 %=: b_2(\delta)\frac{r(h_1-\omega)}{\omega h_1}.
\end{equation}
The third term can be rewritten in a more convenient way
\begin{align*}
\frac{\sqrt{3}s_2\cos \gamma_3-\omega}{\omega} & = \frac{s_2}{\omega}\left( \sqrt{3}\cos \gamma_3-  \sin\gamma_3\right) + \frac{h_1-\omega}{\omega} 
\\
& = \frac{s_2}{\omega(\sqrt{3}\cos\gamma_3 + \sin \gamma_3)}\left({4}\cos^2 \gamma_3- 1\right) + \frac{h_1-\omega}{\omega}
\end{align*}
so that by \eqref{fund}, $\sqrt{3}\cos\gamma_3 + \sin\gamma_3\geq \sqrt{3}$ and $h_1 \leq 3r$ (see \eqref{ratio-rw2} applied to $T$), 
\begin{equation}\label{333}
\frac{\sqrt{3}s_2\cos \gamma_3-\omega}{\omega} \leq \frac{2}{3}\frac{(1+\delta)^2}{(1-2\delta)}\left({4}\cos^2 \gamma_3- 1\right) + 3 \frac{r(h_1-\omega)}{h_1\omega}.
\end{equation}
To estimate the remaining term $4\cos^2\gamma_3-1$, we exploit the cosine formula to obtain
\[
4 \cos^2(\gamma_3) - 1 = \frac{(s_1^2+s_2^2-s_3^2)^2}{s_1^2s_2^2} - 1  = \frac{s_1^4+s_2^4+s_3^4+s_1^2s_2^2-2s_1^2s_3^2-2s_2^2s_3^2}{s_1^2s_2^2},
\]
which combined with the relation $s_1\geq s_2 \geq s_3$ and \eqref{stimas} yields
\[
4 \cos^2(\gamma_3) - 1 \leq 4\frac{s_1^4-s_3^4}{s_1^2s_2^2}\leq \frac{16(s_1-s_3)s_1}{s_2^2}\leq \frac{16(s_1-s_3)s_1}{s_2^2}\leq\frac{16(1+\delta)}{(1-\delta/2)}\frac{(s_1-s_3)}{s_2}.
\]
By using the first inequality of \eqref{111} together with $s_3 \leq s_2$, from the previous estimate we obtain
\[
4 \cos^2(\gamma_3) - 1 \leq \frac{16(1+\delta)(s_1-s_3)}{3(s_1+s_2+s_3)},
\]
which plugged into \eqref{333} gives an estimate for the first term in the previous sum
\begin{equation}\label{la3}
\frac{\sqrt{3}s_2\cos \gamma_3-\omega}{\omega} \leq \frac{32}{3}\frac{(1+\delta)^3}{(1-2\delta)} \frac{(s_1-s_3)}{3(s_1+s_2+s_3)}+ 3 \frac{r(h_1-\omega)}{h_1\omega}.
\end{equation}
%where we have set
%\[
%b_3(\delta):=\frac{32}{3}\frac{(1+\delta)^3}{(1-2\delta)}.
%\]

Finally, by plugging \eqref{la1}, \eqref{la2}, and \eqref{la3} in \eqref{stimaa}, we infer that
$$
\alpha(K)  \leq b_1(\delta)  \frac{(s_1-s_3)}{3(s_1+s_2+s_3)}  +  b_2(\delta)\frac{r(h_1-\omega)}{h_1\omega},
$$
which combined with \eqref{ratio-rw2better} implies
\begin{equation}\label{finale}
\eta(K) \geq \frac{1}{\max\{b_1(\delta), b_2(\delta)\}} \alpha(K),
\end{equation}
where 
\[
b_1(\delta):=\frac{(1+\delta)^2(6\sqrt{3}+\frac{32}{3}(1+\delta))}{1-2\delta} \ \text{ and } \ b_2(\delta):=\left(\frac{2\sqrt{3}{(1+\delta)^2}}{1-\delta/2}+3\right).
\]
This is the quantitative estimate in the small inradius regime, thanks to the fact that $\alpha(K) \geq \beta(K)$. Some comments about the value of the constant $c_4$ appearing in the statement are in the following Remark \ref{c4-rmk}.
\end{proof}

\begin{remark}\label{c4-rmk} In view of the previous proof, the constant $c_4$ appearing in the statement of Theorem \ref{r-omega-quantitative} can be taken as  
$$
c_4(\delta)=\min \left\{ 2 \delta,  \frac{1}{b_1(\delta)}, \frac{1}{b_2(\delta)}\right\},
$$
for some $0<\delta\leq 1/2$. A good choice is $\delta = 1/50$ that gives $c_4= 1/25$.
\end{remark}

\section{Proofs of the quantitative P\'al inequality via asymmetries}\label{sec-a}
In this section we prove the quantitative versions of the P\`al inequality in terms of the asymmetry $\alpha$, defined via the Hausdorff distance, and, as a consequence of Proposition~\ref{equivalence1}, we derive another one for the Fraenkel asymmetry $\mathcal A$.

We start with a refined version of Proposition \ref{lowerphi} which involves, besides the inradius, a sort of circumradius $m(K)$. More precisely, given an indisk $D$ of $K$, assumed without loss of generality centered at the origin, we set
\begin{equation}\label{M}
m(K):=\max_{P\in K} \|P\|.
\end{equation}
Of course this quantity depends on the indisk chosen (and in particular it depends on the center of $D$).% (think, e.g., to the case of a rectangle $K$).

\begin{lemma}\label{psi} 
The function $\psi:\{(x,y)\in \mathbb R^2 : 1/3 \leq x \leq 1/2\,,\ y\geq 1-x\}\to [0,+\infty)$ defined by
\begin{equation}\label{psi2}
\psi(x,y):=\pi x^2 + 2  x\sqrt{1-2x}- 2 x^2 \arccos\frac{x}{1-x}  + x \sqrt{y^2-x^2} - x^2 \arccos\frac{x}{y}
\end{equation}
is increasing in both variables, and in particular 
\begin{equation*}%\label{psi3}
\psi(x,y) \geq \frac{1}{\sqrt{3}}.
\end{equation*}
\end{lemma}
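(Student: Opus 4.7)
The plan is to prove monotonicity in $y$ first, extract the lower bound from it via the identity $\psi(x,1-x)=\varphi(x)$, and then handle monotonicity in $x$ by a case analysis. Geometrically, $\psi(x,y)$ represents the normalized area of the set $C'$ appearing in the proof of Proposition~\ref{lowerphi} but with two ``curved triangles'' having vertex at the standard normalized distance $1-x$ and one with vertex at distance $y$; monotonicity in $y$ (moving the third vertex outward enlarges the set) is therefore expected, while monotonicity in $x$ is more delicate.

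\emph{Monotonicity in $y$ and the lower bound.} Using $\frac{d}{dy}\arccos(x/y)=x/(y\sqrt{y^2-x^2})$, a direct computation shows that the contributions $xy/\sqrt{y^2-x^2}$ and $-x^3/(y\sqrt{y^2-x^2})$ coming from the last two terms of $\psi$ combine to
\[
\partial_y\psi(x,y)=\frac{x\sqrt{y^2-x^2}}{y}\geq 0,
\]
which is nonnegative since $y\geq 1-x\geq 1/2\geq x$. Plugging $y=1-x$ into the definition of $\psi$ gives $\psi(x,1-x)=\pi x^2+3x\sqrt{1-2x}-3x^2\arccos(x/(1-x))=\varphi(x)$; together with Lemma~\ref{properties} this immediately yields
\[
\psi(x,y)\geq \psi(x,1-x)=\varphi(x)\geq \varphi(1/3)=\frac{1}{\sqrt{3}}.
\]

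\emph{Monotonicity in $x$.} A similar but longer differentiation, in which the $\pm x^2/\sqrt{y^2-x^2}$ cross terms cancel and the algebraic identity $2(1-2x)(1-x)-2x(1-x)+2x^2=2(1-2x)^2$ collapses the $x$-dependent group into $2(1-2x)^{3/2}/(1-x)$, produces
\[
\partial_x\psi(x,y)=2\pi x-4x\arccos\frac{x}{1-x}+\frac{2(1-2x)^{3/2}}{1-x}+B(x,y),
\]
with $B(x,y):=\sqrt{y^2-x^2}-2x\arccos(x/y)$. Since $x\geq 1/3$ forces $\arccos(x/(1-x))\leq\arccos(1/2)=\pi/3$, the first two terms are bounded below by $2\pi x/3$ and the third is nonnegative, so it suffices to show $\tfrac{2\pi x}{3}+B(x,y)\geq 0$. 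The obstacle is that $B$ is not monotone in $y$: from $\partial_y B=(y^2-2x^2)/(y\sqrt{y^2-x^2})$ one sees that $B(x,\cdot)$ attains its minimum on $[1-x,\infty)$ either at $y=x\sqrt{2}$ (when $x\geq\sqrt{2}-1$, with value $x(1-\pi/2)$, giving $\tfrac{2\pi x}{3}+B\geq x(\tfrac{\pi}{6}+1)>0$) or at $y=1-x$ (when $x<\sqrt{2}-1$, with value $\sqrt{1-2x}-2x\arccos(x/(1-x))\geq\sqrt{1-2x}-2\pi x/3$, giving $\tfrac{2\pi x}{3}+B\geq\sqrt{1-2x}\geq 0$). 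The non-monotonicity of $B$, which forces this case split at the critical threshold $x=\sqrt{2}-1$, is the principal obstacle; everything else is routine differentiation and algebra.
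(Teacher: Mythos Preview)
Your proof is correct. The computations of $\partial_y\psi$ and $\partial_x\psi$ agree with the paper's, and the reduction of the lower bound to $\psi(x,1-x)=\varphi(x)$ together with Lemma~\ref{properties} is exactly what the paper does. The two arguments diverge only in how they show $\partial_x\psi\geq 0$. The paper bounds the $y$-dependent terms separately and crudely, via $\sqrt{y^2-x^2}\geq\sqrt{1-2x}$ and $\arccos(x/y)\leq\pi/2$, drops the term $2(1-2x)^{3/2}/(1-x)$, and is left with the one-variable function $h(x)=\pi x-4x\arccos(x/(1-x))+\sqrt{1-2x}$, whose positivity it establishes by checking $h(1/3)>0$, $h'(1/3)>0$, and $h''>0$ (a convexity argument). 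You instead first exploit $\arccos(x/(1-x))\leq\pi/3$ on the $x$-only part and then minimize $B(x,y)$ exactly over $y\geq 1-x$, which forces a case split at $x=\sqrt{2}-1$. Your route avoids any second-derivative analysis and keeps every estimate explicit, at the price of the case distinction; the paper's route is uniform in $x$ but requires the convexity computation for $h$. Both are elementary and of comparable length.
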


\begin{proof}
The partial derivatives ${\partial \psi}/{\partial y}$ is equal to $({x}/{y})\sqrt{y^2-x^2}$, which is clearly non-negative on the domain of definition of $\psi$. The partial derivative with respect to $x$ is
%\begin{align*}
\begin{equation*}%\label{bene}
\frac{\partial \psi}{\partial x} = 
2\pi x + 2 \frac{(1-2x)^{3/2}}{(1-x)}  - 4 x \arccos(x/(1-x))
+ \sqrt{y^2-x^2} - 2x \arccos(x/y),%\,,
\end{equation*}
and thus it is more tricky to estimate. 
%\\
%frac{\partial \psi}{\partial y} & = \frac{x}{y}\sqrt{y^2-x^2}\,.
%\end{align*}
%The partial derivative with respect to $y$ is clearly non-negative. Now we prove that  the partial derivative with respect to $x$ is non-negative. 
%We get rid of the second term and, 
%by using the monotonicity of the 
By the monotonicity of the maps $y\mapsto \sqrt{y^2-x^2}$ and % is increasing while the map 
%$x\mapsto \arccos(x/(1-x))$ and 
$y\mapsto \arccos(x/y)$, we infer that % estimate the last two terms as follows:  
%$-x\arccos(x/(1-x))\geq - x \arccos(1/2) = - ({\pi}/{3}) x$ and 
$\sqrt{y^2-x^2}\geq \sqrt{1-2x}$ and $-2x\arccos(x/y) \geq - \pi x$. By using these estimates for the last two terms in the expression of ${\partial \psi}/{\partial x}$, and by getting rid of the second term, we obtain the estimate ${\partial \psi}/{\partial x}\geq h$ with the function $h\colon (1/3,1/2)\to \mathbb R$ defined as 
\[
h(x):=\pi x-4x\arccos(x/(1-x))+\sqrt{1-2x}.
\]
We aim at proving the positivity of $h$ in $(1/3,1/2)$. To do so, we first notice that
\[
h(1/3)=\frac{1}{3}\left(\sqrt{3}-\frac{\pi}{3}\right)>0 \quad \text{and} \quad h'(1/3)=\sqrt{3}-\frac{\pi}{3}>0.
\]
Then, since 
\[
h''(x)=\frac{9(1-2x)(1-x)+(5x-1)(2-3x)}{(1-2x)^{3/2}(1-x)^2}>0
\] 
for every $x\in(1/3,1/2)$, $h$ is convex in $(1/3,1/2)$, and so $h'$ is increasing with $h'(x)\geq h'(1/3)>0$ for every $x\in(1/3,1/2)$. This in turn says that $h$ is increasing with $h(x)\geq h(1/3)>0$ for every $x\in(1/3,1/2)$, providing the positivity of $h$ in $(1/3,1/2)$. As a consequence of the previous lower bound it follows the positivity of $\partial \psi/\partial x$
%\begin{align*}
%\frac{\partial \psi}{\partial x} 
%& \geq 2\Big(\frac\pi3-1\Big)  x + 2 \frac{(1-2x)^{3/2}}{(1-x)} \geq 0,
%\end{align*}
for every admissible $x$ and $y$. The positivity of the partial derivatives yields the monotonicity of $\psi$ in both variables.
%This concludes the proof of the monotonicity of $\psi$ with respect to $x$ and $y$.
Moreover, $y\geq 1-x$ yields $\psi(x,y)\geq \psi(x,1-x)$. Comparing the definition \eqref{psi2} of $\psi$ with the definition \eqref{phi} of $\varphi$, it is immediate to check that $\psi(x,1-x)=\varphi(x)$. By Lemma \ref{properties} $\varphi(x) \geq 1/\sqrt{3}$ then $\psi(x,1-x)\geq 1/\sqrt{3}$, which concludes the proof.
\end{proof}

\begin{proposition}\label{lowerpsi}
For every planar convex body $K$ we have
\begin{equation}\label{lowerpsi2}
\frac{|K|}{\omega^2(K)} \geq \psi\left( \frac{r(K)}{\omega(K)}, \frac{m(K)}{\omega(K)}\right),
\end{equation}
where the function $\psi$ is defined in \eqref{psi} and $m$ is defined in \eqref{M}.
\end{proposition}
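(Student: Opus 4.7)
My plan is to refine the argument of Proposition \ref{lowerphi}. There the lower bound on $|K|$ was obtained from the convex hull $C' = \mathrm{conv}(D \cup \{Q_1', Q_2', Q_3'\})$ of the indisk $D$ (centered at $O$) with three points $Q_i'$ at distance $\omega-r$ from $O$; the area of $C'$ decomposed as $|D| + 3|L|$, where $|L| = r\sqrt{(\omega-r)^2 - r^2} - r^2 \arccos(r/(\omega-r))$ is the area of the ``curved triangle'' with tip at distance $\omega - r$. My refinement is to incorporate a point $P^* \in K$ achieving $\|P^*\| = m(K)$ by substituting it for one of the $Q_i'$; the corresponding curved triangle has area $|L^*| = r\sqrt{m^2 - r^2} - r^2 \arccos(r/m)$.

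First, in case (i) of Lemma \ref{lemma-contact}, where $r = \omega/2$, the first three terms of $\psi$ collapse: $\sqrt{1-2x} = 0$ and $\arccos(x/(1-x)) = 0$ at $x = 1/2$, so
\[\psi(1/2, y) = \frac{\pi}{4} + \frac{1}{2}\sqrt{y^2 - \tfrac{1}{4}} - \frac{1}{4}\arccos\frac{1}{2y}.\]
The bound then follows from $|K| \geq |D| + |L^*|$ by dividing by $\omega^2 = 4r^2$.

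Next, in case (ii), I would select $i^* \in \{1, 2, 3\}$ so that the angular direction $OQ_{i^*}'$ is the closest (viewed from $O$) to the direction $OP^*$, and form
\[C^* := \mathrm{conv}\bigl(D \cup \{P^*\} \cup \{Q_j' : j \neq i^*\}\bigr) \subset K.\]
Provided the three resulting curved triangles (two of area $|L|$ and one of area $|L^*|$) have pairwise disjoint interiors, one gets $|C^*| = |D| + 2|L| + |L^*|$; dividing by $\omega^2$ then yields $\psi(r/\omega, m/\omega)$.

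The hard part will be verifying this disjointness. The three directions $OQ_i'$ are opposite to the contact points $P_i$, giving pairwise angular separations $\angle Q_i'OQ_j' = \pi - \gamma_k$ where $\gamma_k$ is the angle of $T$ at $V_k$. The small curved triangles have angular half-width $\arccos(r/(\omega-r))$, while $L^*$ has angular half-width $\arccos(r/m) \geq \arccos(r/(\omega-r))$. With $i^*$ chosen as the angularly closest index, $L^*$ is centered within or near the angular sector previously occupied by $L_{i^*}$, and the disjointness condition $\pi - \gamma_k \geq \arccos(r/m) + \arccos(r/(\omega-r))$ must be checked; the acute-triangle property of the contact points, together with the inequalities $\omega \leq h_1$ and $r/\omega \geq 1/3$ from Theorem \ref{teo.rw}, should provide the needed slack.
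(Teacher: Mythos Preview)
Your overall strategy—replace one of the three points $Q_i'$ by a point $P^*$ with $\|P^*\|=m$ so that $|K|\geq |D|+2|L|+|L^*|$, and handle $r=\omega/2$ via $|K|\geq |D|+|L^*|$—is exactly the paper's. The gap is in the disjointness step. First, the directions $OQ_i'$ are not in general opposite to the contact points $P_i$: the point $Q_i'$ lies on the ray $OQ_i$, and $Q_i$ is the boundary point of $K$ on $\tilde\ell_i$ closest to the vertex $V_i$, not the foot of the perpendicular from $O$; so the formula $\angle Q_i'OQ_j'=\pi-\gamma_k$ is not available. Second, even granting that formula, the inequality you propose, $\pi-\gamma_k\geq \arccos(r/m)+\arccos(r/(\omega-r))$, can fail outright: $\arccos(r/m)\to\pi/2$ as $m\to\infty$, while the circumscribed triangle $T_K$ may be obtuse (the acuteness hypothesis in Lemma~\ref{lemma-contact} is on the contact triangle $P_1P_2P_3$, not on $T_K$), so $\pi-\gamma_k$ can be arbitrarily small. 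The bounds $\omega\leq h_1$ and $r/\omega\geq 1/3$ do not prevent this.

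The paper's fix is a different selection rule that makes disjointness automatic and requires no quantitative inequality. Split the plane into three sectors by the rays $OP_1,OP_2,OP_3$ to the contact points, and let $i^*$ be the index of the sector containing $P^*$; then keep $Q_j'$ for $j\neq i^*$ (these lie in the other two sectors). For any $W\in K$ exterior to $D$, each tangent $\ell_j$ at a contact point is a supporting line of $K$, so $W$ lies on the $D$-side of $\ell_j$; this forces every $P_j$ onto the ``far arc'' of $W$, hence the curved triangle $L_W$ attaches to $\partial D$ along a sub-arc that avoids all three $P_j$ and therefore lies entirely within the arc bounding $W$'s sector. With one tip per sector, the three attachment arcs are pairwise disjoint, and so are the curved triangles.
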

\begin{proof}
Let $D$ be an indisk of $K$, without loss of generality centered at the origin. For brevity, denote by $r=r(K)=r(D)$, $\omega=\omega(K)$, and $m=m(K)$. Let $V\in K$ be a point of maximal norm, namely such that $m=\|V\|$. 

Assume first $r<\omega / 2$. Then, in view of Lemma \ref{lemma-contact}, there exist three contact points $P_1, P_2, P_3\in \partial D\cap \partial K$ forming an acute-angled triangle. Connecting the points $P_i$ with the origin, we split $K$ into three regions $K_1, K_2, K_3$. Without loss of generality, we may assume that $V\in K_1$. As already done in the proof of Proposition~\ref{lowerphi}, for $i=2,3$ we find ${Q}_i^\prime\in K_i$ with $\|Q_i^\prime\|=\omega-r$, see Figure~\ref{fig6}. The convex hull $C:=\mathrm{conv}(D;V;Q_2^\prime; Q_3^\prime)$ is, by construction and by convexity of $K$, a subset of $K$. In particular $|K|\geq |C|$. The area of $C$ can be easily computed, by noticing that $C$ is the union of $D$ and three "curved triangles". The boundary of each curved triangle is made of an arc of $\partial D$ and two segments starting from $V$ or ${Q}_2^\prime$ or $Q_3^\prime$, tangent to $\partial D$ at the endpoints of the arc. It is immediate to check that the area of the curved triangle starting from a point $P$, $P\notin D$, is
$$
r \sqrt{\|P\|^2-r^2} - r^2\arccos(r/\|P\|).
$$
Applying this formula to $P=V$, $P=Q_i^\prime$, and recalling that $\|V\|=m$, $\|Q_i^\prime \| = \omega-r$, we obtain
\begin{equation*}
%\begin{split}{
%|K|\geq |C| = \pi r^2 + r \sqrt{m^2-r^2} - r^2\arccos(r/m)}
%\\
%{+2\left[ r\sqrt{(\omega-r)^2-r^2} - r^2 \arccos(r/(\omega-r))  \right].}
%\end{split}
%\begin{split}{
|K|\geq |C| = \pi r^2 + r \sqrt{m^2-r^2} - r^2\arccos\frac rm
+2\Big( r\sqrt{(\omega-r)^2-r^2} - r^2 \arccos\frac{r}{\omega-r}\Big).
\end{equation*} 
Dividing this expression by $\omega^2$ and recalling definition \eqref{psi2} of $\psi$, we obtain the estimate \eqref{lowerpsi2} in the regime $r<\omega/2$.

In the case $r=\omega/2$, we consider the convex hull $\widetilde{C}:=\mathrm{conv}(D;V)\subset K$, namely the union of $D$ and only 1 ``curved triangle":
$$
|K| \geq |\widetilde{C}| =\pi r^2 + r \sqrt{x^2-r^2} - r^2\arccos(r/x).
$$ 
Again, dividing by $\omega^2$ and recalling that $r=\omega/2$, one obtains \eqref{lowerpsi2}.
\end{proof}

\begin{proof}[Proof of Theorem~\ref{teo2}]
Let $K$ be a planar convex body and $D$ be an indisk of $K$. Without loss of generality, we assume $D$ to be centered at the origin. For brevity we use the notation $r=r(K)=r(D)$, $\omega=\omega(K)$, $\eta=r/\omega - 1/3$. From Theorem~\ref{ratio-rw} the quantity $\eta\in[0,1/6]$ and again, as in the proof of Theorem \ref{r-omega-quantitative}, we divide the proof according to the size of $\eta$. Fix some $0<\delta\leq 1/2$. 

\smallskip
\emph{(Small inradius).} If $\eta< \delta/3$, then the quantitative estimate easily follows from what we already proved.
Indeed by using Theorem~\ref{teo1} with \eqref{finale} (that corresponds to the small inradius case in the proof of Theorem~\ref{r-omega-quantitative}), we have 
\[
\frac{|K|}{\omega^2(K)} - \frac{1}{\sqrt{3}} \geq c_1\eta(K)\geq c_1c_4\alpha(K)
\]
and the quantitative inequality holds with the constant $c_1c_4$.

\smallskip

\emph{(Large inradius).}
Let us assume that $\eta \geq \delta/3$. In order to find a quantitative estimate relating deficit and asymmetry, our strategy consists in providing a strictly positive lower bound of the ratio deficit/asymmetry. In view of Proposition \ref{lowerpsi}, the deficit satisfies
\begin{equation}\label{numerator}
|K|/\omega^2(K) -1/\sqrt{3} \geq \psi\left( \frac{r}{\omega},\frac{m}{\omega}\right) - \frac{1}{\sqrt{3}} \geq 0,
\end{equation}
being $\psi$ the function defined in \eqref{psi2} and $m:=\max_{P\in K} \|P\|$. 
Notice that the maximal distance from the origin $m$ satisfies $m\geq \omega-r$ (see also the proof of Propositions \ref{lowerphi} and \ref{lowerpsi}).

Let us bound from above the asymmetry. To this aim, we choose the equilateral triangle $E$, with $\omega(E)=\omega$, centered at the origin and with one height oriented as the segment $OV$. Let $D(m)$ denote the disk centered at the origin with radius $m$. Exploiting the inclusions
$$
D \subset K \subset D(m),
$$
we infer that
\begin{equation}\label{combi1}
d_{\mathcal H}(K,E)  \leq \max \{ d_{\mathcal H}(D,E) , d_{\mathcal H}(D(m),E)\} 
\end{equation}
Recalling that $r\leq \omega/2$ and $m\geq \omega-r\geq \omega/2$, by using \eqref{small} and \eqref{big} in Lemma \ref{TBdist} applied to $D$ and $D(m)$, respectively, we obtain
\begin{equation}\label{combi2}
d_{\mathcal H}(D,E) = 2\omega/3 - r, \quad d_{\mathcal H}(D(m),E)
= m- \omega/3.
\end{equation}
Furthermore, using once again $m\geq \omega - r$, we get
\begin{equation}\label{combi3}
\max\{ 2\omega/3 - r, m - \omega/3 \} = m- \omega/3.
\end{equation}
By combining \eqref{combi1}, \eqref{combi2}, \eqref{combi3}, we obtain the following bound on the asymmetry:
\begin{equation}\label{denominator}
\alpha(K) \leq \frac{d_{\mathcal H}(K,E)}{\omega} \leq \frac{m}{\omega} - \frac13.
\end{equation}

The estimates \eqref{numerator} and \eqref{denominator} give
\begin{equation}\label{fraction}
\frac{|K|/\omega^2(K) -1/\sqrt{3} }{\alpha(K)} \geq \frac{\psi(r/\omega, m/\omega) - 1/\sqrt{3}}{m/\omega-1/3}.
\end{equation}
Notice that, under the assumption $\eta \geq \delta/3$, $K$ is ``far" from the equilateral triangle and $\alpha(K)\neq 0$.
The estimate \eqref{fraction} allows to obtain the desired quantitative estimate, provided that 
the function
$$
\Psi(x,y):=\frac{\psi(x,y)-1/\sqrt{3}}{y-1/3}
$$
has strictly positive infimum in the set 
\begin{equation*}%\label{set}
\left\{(x,y)\in \mathbb R^2\ :\ (1+\delta)/3 \leq x \leq 1/2, \quad y \geq 1-x\right\}.
\end{equation*}
The two bounds for $x=r/\omega$ and $y=m/\omega$ come from the standing assumption $\eta \geq \delta/3$ and from $m\geq \omega-r$.

By using the increasing monotonicity of $x\mapsto \psi(x,y)$ proved in Proposition \ref{lowerpsi}, we infer that $\Psi$ is increasing in $x$, too and 
$$
\Psi(x,y) \geq \Psi((1+\delta)/3,y):=q(y).
$$
We prove the monotonicity of $q$ to obtain a bound on $q$. Let $p(y):=\psi((1+\delta)/3,y)$, then the derivative
\begin{equation*}%\label{psiy}
q'(y)=\frac{(y-1/3)p'(y) - (p(y)-1/\sqrt{3})}{ (y-1/3)^2}.
\end{equation*}
The denominator is clearly positive. Let us prove that also the numerator 
$N(y):=(y-1/3)p'(y) - (p(y)-1/\sqrt{3})$ is positive by studying its monotonicity.
A direct computation leads to
$$
N'(y) =  (y-1/3) p''(y) =(y-1/3) \frac{((1+\delta)/3)^3}{y^2\sqrt{y^2-((1+\delta)/3)^2}}>0,
$$
thus $N$ is an increasing function and one obtains
\begin{equation*}%\label{N}
N(y)  \geq N\Big(\frac{2-\delta}{3}\Big)
\!=\! \frac{1}{\sqrt{3}}\Big(1 - \frac{1+\delta}{2-\delta} \sqrt{1-2\delta}\Big) - \frac{1}{9}(1+\delta)^2 \Big( \pi - 3 \arccos\frac{1+\delta}{2-\delta}\Big).
\end{equation*}
The two terms in brackets are positive for $\delta \in (0,1/2]$. To guarantee the positivity of $N(y)$ we need to make a choice on $\delta$: a direct computation shows that the right-hand side is positive taking, e.g., $\delta=1/50$. 
With this choice on $\delta$, we infer that $q'>0$, thus $q$ is increasing. In particular
$
\Psi(x,y)\geq q(y)\geq q({(2-\delta)}/{3}) \geq \varphi((1+\delta)/3)>0,
$
and the quantitative estimate for the deficit holds with the constant $c_2=\varphi((1+\delta)/3)$. A comment on the value of $c_2$ is postponed in Remark~\ref{onc2}.
\end{proof}

The quantitative P\'al inequality for the Fraenkel asymmetry can be thus derived from Theorem~\ref{teo2}.

\begin{proof}[Proof of Theorem~ \ref{teo3}]
This is an immediate consequence of Theorem~\ref{teo2} and Proposition~\ref{equivalence1}:
$$
\frac{|K|}{\omega^2(K) }- \frac{1}{\sqrt{3}} \geq c_2 \alpha(K) \geq \frac{c_2}{3\sqrt{3} + 2} \mathcal A(K).
$$
The constant appearing in front of $\mathcal A$ is the desired $c_3$. A comment on its value is postponed in Remark \ref{onc2}.
\end{proof}

\begin{remark}\label{onc2}
In view of the proof of Theorem \ref{teo2}, the constant $c_2$ appearing in the statement can be taken as
$$
c_2(\delta)=\min\{c_1c_4(\delta), \varphi(1+ \delta)/3) \}
$$
for some $0<\delta\leq 1/2$. The previous choice $\delta = 1/50$ in Remark~\ref{c4-rmk} yields $c_4=1/25$. Moreover, by Theorem~\ref{teo1}) the constant $c_1=1/\sqrt{5}$ and therefore
$$
c_2=\min\left\{{1}/({25\sqrt{5}}), \varphi(51/150) \right\} =  \frac{1}{25\sqrt{5}}.
$$
This also allows us to compute a possible value for the constant $c_3$ appearing in the statement of Theorem~\ref{teo3}: 
$$
c_3= \frac{c_2}{3\sqrt{3} + 2} = \frac{1}{25(3\sqrt{3} + 2)\sqrt{5}}.
$$
\end{remark}

Another interesting consequence of Theorem~\ref{teo2} is the equivalence of the two asymmetries $\alpha$ and $\mathcal A$. 
\begin{corollary}\label{equivalence}
For every planar convex body, there hold
$$
 \frac{1}{25\sqrt{5}} \alpha(K) \leq \mathcal A(K) \leq (3\sqrt{3} + 2) \alpha(K).
$$
\end{corollary}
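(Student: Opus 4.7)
The statement is a two-sided inequality. The right-hand estimate $\mathcal A(K)\le (3\sqrt 3+2)\alpha(K)$ is exactly the conclusion of Proposition~\ref{equivalence1} and needs no further argument. The content therefore lies in the left-hand inequality $\alpha(K)\le 25\sqrt 5\,\mathcal A(K)$. The plan is to sandwich the P\'al deficit between the two asymmetries, using Theorem~\ref{teo2} as the lower bound on the deficit and a direct area-comparison argument as the upper bound.

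The key observation is that the Fraenkel-type quantity $\mathcal A(K)$ automatically dominates the P\'al deficit. Indeed, for any equilateral triangle $E$ with $\omega(E)=\omega(K)$ one has $|E|=\omega^2(E)/\sqrt 3$, and the P\'al inequality gives $|K|\ge |E|$. Using the elementary identity $|K|-|E|=|K\setminus E|-|E\setminus K|$ and nonnegativity of the second term, one deduces
$$
\frac{|K|}{\omega^2(K)}-\frac{1}{\sqrt 3}=\frac{|K|-|E|}{\omega^2(E)}=\frac{|K\setminus E|-|E\setminus K|}{\omega^2(E)}\le \frac{|K\Delta E|}{\omega^2(E)}.
$$
Taking the infimum over equilateral triangles $E$ of width $\omega(K)$ yields
$$
\frac{|K|}{\omega^2(K)}-\frac{1}{\sqrt 3}\le \mathcal A(K).
$$

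Combining this with Theorem~\ref{teo2} and the explicit constant $c_2=1/(25\sqrt 5)$ recorded in Remark~\ref{onc2} then gives
$$
\frac{1}{25\sqrt 5}\,\alpha(K)\le \frac{|K|}{\omega^2(K)}-\frac{1}{\sqrt 3}\le \mathcal A(K),
$$
which is the desired left-hand estimate. I do not foresee any real obstacle here: both inputs are already in hand, and the only substantive step is recognising that the signed decomposition of $|K|-|E|$ together with P\'al's inequality cheaply trades the deficit for an upper bound by $\mathcal A(K)$, after which Theorem~\ref{teo2} closes the loop.
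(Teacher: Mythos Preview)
Your proof is correct and follows essentially the same route as the paper: the upper bound is Proposition~\ref{equivalence1}, and for the lower bound you bound the P\'al deficit above by $|K\Delta E|/\omega^2(E)$ via the identity $|K|-|E|=|K\setminus E|-|E\setminus K|$ (the paper writes this as $|K\Delta E|\ge |K\setminus E|=|K|-|E|+|E\setminus K|\ge |K|-|E|$, which is the same computation), then invoke Theorem~\ref{teo2} with $c_2=1/(25\sqrt5)$. The appeal to the P\'al inequality for $|K|\ge|E|$ is not actually needed in your argument, since $|K\setminus E|-|E\setminus K|\le |K\setminus E|+|E\setminus K|$ holds regardless of sign.
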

\begin{proof}
The upper bound has been already proved in Proposition~\ref{equivalence1}. Then we just focus on the first bound.
Given a planar convex body $K$ let $E$ be an equilateral triangle with $\omega(E)=\omega(K)$. Taking the inequality
$$
|K \Delta E| = |K\setminus E| + |E\setminus K| \geq |K\setminus E| = |K|-|E| + |E\setminus K| \geq |K| - |E|,
$$
and dividing it by $\omega^2(E)$, one recognizes the deficit of the area in the right-hand side. Then, using Theorem~\ref{teo2} and taking to the minimum over the equilateral triangles $E$ of same width as of $K$, one obtains $\mathcal A(K)\geq \alpha(K)/{25\sqrt{5}}$.
\end{proof}

\section{Sharpness of the exponents}\label{sec-sharp}

Let us now prove the part concerning the sharpness of the exponent 1 appearing in the right-hand side of the quantitative inequalities in Theorems ~\ref{teo1}, \ref{teo2}, \ref{teo3}, and~\ref{r-omega-quantitative}. This means that the functional $F$ that we want to estimate from below, which in our case is either the area deficit or the quantity $\eta$, can be controlled by an asymmetry $a$ for every exponent $t\geq1$ 
\[
F(K)\geq c \cdot a^t(K)
\]
but not as $t<1$. Here and in the following $c$ will denote a generic positive constant, that can change from line to line. The usual strategy to prove this sharpness is to exhibit a sequence of sets $\left(K_\epsilon\right)_{\epsilon}$ for which in the limit as $\epsilon \to 0$ the left-hand side and the right-hand side vanish with the same rate of convergence, that is
$$\lim_{\epsilon\to 0}\frac{F(K_\epsilon)}{a(K_\epsilon)}= c$$ 
for some (finite) constant $c>0$. 

For the sharpness of the quantitative inequalities of this paper we will use the same sequence of sets, given by a family of isosceles triangles converging to an equilateral triangle. Let $E$ be the equilateral triangle with vertexes $(0,1)$, $\pm(1/\sqrt{3},0)$. The triangle $E$ has width $1$. For every $\epsilon$ sufficiently small, we consider $K_\epsilon$ the isosceles triangle with vertexes $(0,1)$ and $\pm (1/\sqrt{3} + \epsilon,0)$.

Let us start with the quantitative inequality in Theorem~\ref{teo1}, relating the area deficit (in the left-hand side) with the quantity $\eta$ (in the right-hand side).
 It is immediate to check that the width and the area of $K_\epsilon$ are given by  
\begin{equation*}%\label{widtharea}
\omega(K_\epsilon)=1, \qquad |K_\epsilon|  = \frac1{\sqrt{3}} + \epsilon,
\end{equation*}
and that the perimeter of $K_\epsilon$ satisfies
\begin{equation*}%\label{perimeter}
P(K_\epsilon)= 2 \sqrt{3}\left(1 +\frac{\sqrt{3}}{2} \epsilon\right) + o(\epsilon),\quad \text{as $\epsilon \to0$}.
\end{equation*}
The inradius can be computed using the relation $P(K_\epsilon) r(K_\epsilon) = 2 |K_\epsilon|$:
\[
r(K_\epsilon) = \frac13 + \frac{\epsilon}{2\sqrt{3}}+o(\epsilon),\quad \text{as $\epsilon \to0$}.
\]
Since
$$
\frac{|K_\epsilon|}{\omega(K_\epsilon)} - \frac{1}{\sqrt{3}}=\epsilon
$$
and
$$
\eta(K_\e) =  \frac{r(K_\epsilon)}{\omega(K_\epsilon)} - \frac13  = \frac{\epsilon}{2\sqrt{3}}+o(\epsilon),\quad \text{as $\epsilon \to0$},
$$
we deduce that the area deficit and the quantity $\eta$ vanish with the same rate of convergence, implying the sharpness of the exponent 1 in  Theorem~\ref{teo1}.

We proceed in the same way for the quantitative inequality appearing in Theorem \ref{teo2}, relating the area deficit (in the left-hand side) with the asymmetry $\alpha$ (in the right-hand side). We start by showing that $\alpha(K_\epsilon)=\epsilon$. Testing $\alpha$ with $E$ we obtain
$$
\alpha(K_\epsilon) \leq d_{\mathcal{H}}(K_\epsilon, E) = \epsilon.
$$
Assume by contradiction that for every $\epsilon>0$ sufficiently small this inequality is strict. This implies the existence of an equilateral triangle $\widetilde{E}$ of width $1$, such that, for some $\delta <\epsilon$,
$$
K_\epsilon \subset (\widetilde{E})_\delta.
$$
Comparing the diameters we obtain that:
$$
\frac{2}{\sqrt{3} }+ 2 \epsilon  = \mathrm{diam}(K_\epsilon) \leq  \mathrm{diam}(\widetilde{E}) + 2 \delta  \leq  \frac{2}{\sqrt{3} }+ 2 \delta,
$$
thus $\epsilon \leq \delta$, a contradiction with the hypothesis. Therefore, taking $\epsilon$ small, we infer that $\alpha$ is of order $\epsilon$, as the area deficit. This concludes the proof of the sharpness of the exponent 1 in Theorem \ref{teo2}.

The previous computations imply the sharpness of the exponent 1 also in the quantitative estimates appearing in Theorems \ref{teo3} and \ref{r-omega-quantitative}: the former relates the deficit with the asymmetry $\mathcal A$, the latter relates the quantity $\eta$ with the asymmetry $\beta$. Exploiting the equivalence \eqref{equivalence} between $\mathcal A$ and $\alpha$ and the definition \eqref{beta} of $\beta$ in terms of $\alpha$, we infer that both asymmetries vanish with rate of convergence $\epsilon$ along the sequence $K_\epsilon$.

\paragraph{\textbf{Acknowledgments.}} It is a pleasure to acknowledge Lorenzo Brasco for  pointing out the reference \cite{ind}. The first author has been partially supported by the GNAMPA 2023 project \emph{Esistenza e propriet\`a fini di forme ottime}. The authors have been partially supported by the GNAMPA 2024 project \emph{Ottimizzazione e disuguaglianze funzionali per problemi geometrico-spettrali locali e nonlocali}. The second author has been partially supported by the GNAMPA 2023 project \emph{Teoria della regolarit\`a per problemi ellittici e parabolici con diffusione anisotropa e pesata} and by the PRIN 2022 project 2022R537CS \emph{$NO^3$ - Nodal Optimization, NOnlinear elliptic
equations, NOnlocal geometric problems, with a focus on regularity}, founded
by the European Union - Next Generation EU.

\end{document}